\documentclass[preprint,11pt]{elsarticle}
\usepackage{amsfonts}
\usepackage{amsmath,amssymb,amsthm}
\usepackage{graphicx}
\usepackage{color}
\usepackage{soul} 
\usepackage{color, xcolor} 

\makeatletter
\def\ps@pprintTitle{%
  \let\@oddhead\@empty
  \let\@evenhead\@empty
  \def\@oddfoot{\reset@font\hfil\thepage\hfil}
  \let\@evenfoot\@oddfoot}
\makeatother

\newtheorem{defin}{Definition}[section]
\newenvironment{definition}{\begin{defin}\rm}{\end{defin}}
\newtheorem{theorem}[defin]{Theorem}
\newtheorem{lemma}[defin]{Lemma}
\newtheorem{proposition}[defin]{Proposition}
\newtheorem{corollary}[defin]{Corollary}
\newtheorem{qu}{Question}

\begin{document}

\begin{frontmatter}

\title{The Hartman--Mycielski construction in topological MV-algebras\corref{cor2}}


\author[mymainaddress]{Li-Hong Xie}
\ead{yunli198282@126.com}

\author[mymainaddress1]{Jiang Yang\corref{cor1}} 
\ead{yangjiangdy@126.com}

\cortext[cor1]{Corresponding author: Jiang Yang}
\cortext[cor2]{The project is supported by the Natural Science Foundation of Guangdong Province under Grant (No. 2021A1515010381) and the Innovation Project of Department of Education of Guangdong Province (No. 2022KTSCX145).}
\address[mymainaddress]{School of Mathematics and Computational Science, Wuyi University, Jiangmen, Guangdong, 529000, P.R. China}

\address[mymainaddress1]{ School of Mathematical Sciences, Guangxi Minzu University, Nanning, 530006, P.R. China}

\begin{abstract}
Recently, topological MV-algebras have been investigated by several mathematicians. In this paper, we mainly show that for every Hausdorff topological MV-algebra $A$,
 there exists a natural topological
isomorphism $i_A:A\rightarrow A^\bullet$ of $A$ onto a closed subalgebra of the pathwise connected, locally
pathwise connected topological MV-algebra $A^\bullet$. Furthermore, we show that there is an extension to a bounded continuous function
on the MV-algebra $A^\bullet$ for each continuous real-valued bounded
 function on a topological MV-algebra $A$. Finally, we prove that if $\varphi:A_1\rightarrow A_2$ is a continuous homomorphism of
 topological MV-algebras, then $\varphi$ admits a natural extension to a
continuous homomorphism $\varphi^\bullet:A_1^\bullet\rightarrow A_2^\bullet$; in addition, if $\varphi$ is open and onto, then so is $\varphi^\bullet$.
\end{abstract}

\begin{keyword}
topological MV-algebras; pathwise connected; topological MV-algebras; Hartman--Mycielski construction; pathwise connectedness; topological embeddings

\MSC[2020] 06D35; 54H13; 54C35
\end{keyword}

\end{frontmatter}

\section{Introduction}
In recent decades, a number of algebraic structures associated with logical systems have been studied by many mathematicians.
 For instance, Y. Imai and K. Is\'{e}ki \cite{Imai} introduced BCK-algebras as an algebraic formulation of Meredith's BCK-implicational calculus.
  To investigate many-valued logic by algebraic means, BL-algebras have been defined by
H\v{a}jek \cite{Haj}. Undoubtedly, MV-algebras, which were introduced by Chang \cite{Chang} in order to show {\L}ukasiewicz logic to be standard complete,
are among the most important structures associated with logical systems, where ``MV'' is short for ``many-value''. Furthermore, a number of algebraic structures
associated with logical systems endowed with a topology have been investigated by several mathematicians \cite{Bor,Hav, Rou,Yang}.
In particular, Hoo \cite{Hoo} introduced the notion of a topological MV-algebra, which means an MV-algebra $(A,\oplus,\ast,0)$ with a topology such that the operations
$\oplus$ and $\ast$ are continuous functions. Some fundamental properties were investigated by Hoo. In 2012, Weber \cite{Web} obtained a representation theorem for complete MV-algebras
endowed with a Hausdorff order continuous locally convex topology which admits a 0-neighbourhood base
consisting of sublattices. In 2017, Najafi, Rezaei and Kouhestani \cite{Naj} introduced some weaker
 versions of topological MV-algebras such as (para, quasi, semi)topological MV-algebras. Later, MV-algebras endowed with filter topologies were studied by
  Luan-Yang \cite{Luan-Yang}. Asadzadeh-Rezaei-Jamalzadeh \cite{ARJ}, Wu-Luan-Yang
\cite{WLY}, Luan-Zhao-Yang \cite{LZY}, and Luan-Weber-Yang \cite{LWY} further developed filter topology theory on MV-algebras. Recently, Gan-Luan-Deng-Yang \cite{GLD}
proved that the separation axioms $T_0, T_1, T_2, T_3$ are equivalent for topological MV-algebras. However, whether every $T_0$ topological MV-algebra is Tychonoff is still open. In 2026,
Li-Yang \cite{LY} gave a positive answer to this question in the locally convex topological MV-algebras. It is well known that every Hausdorff topological group $G$
is topologically isomorphic to a closed subgroup of a connected, locally connected topological group \cite{HM}. Motivated by these results, it is natural to investigate
whether every topological MV-algebra also possesses those properties, specifically, whether every Hausdorff topological MV-algebra can be topologically isomorphic to a closed
subalgebra of a connected, locally connected topological MV-algebra.

The paper is organized as follows. After some preliminaries on MV-algebras in Section 2,  we show in Section 3 that for every Hausdorff topological MV-algebra $A$,
 there exists a natural topological
isomorphism $i_A:A\rightarrow A^\bullet$ of $A$ onto a closed subalgebra of the pathwise connected, locally
pathwise connected topological MV-algebra $A^\bullet$ (see Theorem \ref{the1}). Furthermore, we show that there is an extension to a bounded continuous function
on the MV-algebra $A^\bullet$ for each continuous real-valued bounded
 function on a topological MV-algebra $A$ (see Corollary \ref{cor1}). Finally, we prove that if $\varphi:A_1\rightarrow A_2$ is a continuous homomorphism of
 topological MV-algebras, then $\varphi$ admits a natural extension to a
continuous homomorphism $\varphi^\bullet:A_1^\bullet\rightarrow A_2^\bullet$; in addition, if $\varphi$ is open and onto, then so is $\varphi^\bullet$
(see Theorem \ref{The3}).

\section{Preliminaries}\label{Sec:2}

In this section, we collect some relevant definitions and results from $MV$-algebras to make this paper self-contained.

\begin{definition}\label{Def:1}\cite{Chang,CD,Mun}
An $MV$-algebra is an algebra $(A,\oplus,\ast, 0)$ of type $(2, 1, 0)$ such that for all $x,y\in A$,
\begin{enumerate}
\item[(MV1)] $(A,\oplus, 0)$ is a commutative monoid;
\item[(MV2)] $x^{\ast\ast}=x$;
\item[(MV3)] $x\oplus 0^\ast=0^\ast$;
\item[(MV4)] $(x^\ast\oplus y)^\ast\oplus y=(y^\ast\oplus x)^\ast\oplus x$.
\end{enumerate}
\end{definition}

In an $MV$-algebra $A$ for any $x, y \in A$ we define:
\begin{enumerate}
\item[(MV5)] $1 := 0^*$;
\item[(MV6)] $x \odot y := (x^* \oplus y^*)^*$;
\item[(MV7)] $x \ominus y := x \odot y^*$.
\end{enumerate}

In an $MV$-algebra \( A \), for any \( x, y \in A \) we define \( x \leq y \) if and only if \( x^* \oplus y = 1 \). It is well known \cite{CD} that \(\leq\) is
a partial order relation on \( A \), which determines a distributive lattice structure, where the join
\[ x \vee y := y \oplus (x \ominus y) ,\]
the meet
\[ x \wedge y := x \odot(x^* \oplus y), \] where 0 is the smallest element and 1 is the greatest element. By (MV6) and (MV7), for
any \( x, y \in A \),
\[ x \leq y \iff x \ominus y = 0. \]

\begin{lemma}\cite{Chang, CD, Mun}\label{Lema1}
 Let $A$ be an MV-algebra. For any $x,y,z\in A$, the following statements hold:
 \begin{enumerate}
 \item[(1)] $ x =(x \wedge y) \oplus(x\ominus y);$
 \item[(2)] $z\odot(x\vee y)=(z\odot x)\vee (z\odot y);$
 \item[(3)] $x\odot x^\ast=0;$
 \item [(4)] $x\leq y\Leftrightarrow y=x\vee y$
 \end{enumerate}
\end{lemma}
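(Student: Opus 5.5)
The plan is to verify the four items directly from the axioms (MV1)--(MV4) and the definitions (MV5)--(MV7), using the standard consequences collected in \cite{CD}. The tools are the involution law (MV2), De Morgan duality between $\oplus$ and $\odot$ (which follows from (MV6) and (MV2)), and the characterization $x\le y \iff x\ominus y=0$ recalled above. I would prove the items in the order (3), (4), (1), (2), since each later item uses the earlier ones.

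For (3), I rewrite $x\odot x^\ast=(x^\ast\oplus x^{\ast\ast})^\ast=(x^\ast\oplus x)^\ast$ by (MV6) and (MV2). The key point is then $x^\ast\oplus x=1$. To get it, apply (MV4) with $y=0$: this gives $(x^\ast\oplus 0)^\ast\oplus 0=(0^\ast\oplus x)^\ast\oplus x$. The left side is $x$, and by (MV3) together with commutativity the right side is $1^\ast\oplus x=0\oplus x$. So this instance is consistent but gives nothing new, and I need the standard lemma $x\oplus x^\ast=1$ instead. I take it from \cite{CD}, where it is derived by applying (MV4) with $y=1$. With that lemma, $(x^\ast\oplus x)^\ast=1^\ast=0$.

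For (4), if $x\le y$, then $x\ominus y=0$, so $x\vee y=y\oplus(x\ominus y)=y\oplus 0=y$. Conversely, if $y=x\vee y$, I would argue that $x\le x\vee y$; this follows since $\vee$ is the lattice join for the order $\le$, as recalled before the lemma. Hence $x\le y$.

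For (1), expand $x\wedge y=x\odot(x^\ast\oplus y)=(x^\ast\oplus(x^\ast\oplus y)^\ast)^\ast$. By (MV4), $(x^\ast\oplus y)^\ast\oplus y=(y^\ast\oplus x)^\ast\oplus x$. Dualizing gives $x\wedge y=y\wedge x$, and the meet is then expressed as $y\odot(y^\ast\oplus x)$. Next write $x\ominus y=x\odot y^\ast$. Taking complements, the claim becomes $x^\ast=(x\wedge y)^\ast\odot(x\odot y^\ast)^\ast$, which I would check by reducing it to the join formula applied to $x^\ast$ and $y^\ast$. A cleaner alternative is to use that every MV-algebra is a subdirect product of MV-chains, or Chang's completeness theorem. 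Then (1) reduces to the identity $x=\min(x,y)+\max(x-y,0)$ in $[0,1]$, which holds by checking the cases $x\le y$ and $x>y$. I would adopt this route for (1) and (2), since both are equations.

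For (2), in the standard algebra $[0,1]$ one has $z\odot t=\max(z+t-1,0)$. This map is monotone in $t$, so it commutes with $\max$, which gives $z\odot\max(x,y)=\max(z\odot x,z\odot y)$. The main obstacle is justifying that equational reduction. I would cite Chang's completeness theorem \cite{Chang}: an equation holds in all MV-algebras if and only if it holds in $[0,1]$. This is legitimate because the lemma is itself quoted from \cite{Chang,CD,Mun}.
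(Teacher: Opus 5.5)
The paper gives no proof of this lemma. It is quoted from \cite{Chang,CD,Mun}, where all four items are obtained by short computations from (MV1)--(MV4) and the natural order. Your arguments for (3), via $x^\ast\oplus x=1$ (which is (MV4) with $y=1$), and for (4) are of exactly that kind and are correct.

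For (1) and (2) you take a genuinely different route: you reduce to the standard algebra $[0,1]$ by Chang's completeness theorem. That is valid. In (1), $\oplus$ is truncated addition, but this is harmless because the sum equals $x\le 1$. The route buys uniformity, since every such identity becomes a two-case check on real numbers.

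The price is that you lean on a deep theorem whose standard proof is built on identities of precisely this type. In \cite{CD} the law (2), equivalently its De Morgan dual $z\oplus(x\wedge y)=(z\oplus x)\wedge(z\oplus y)$, is established first. It is then used on the way to the subdirect representation theorem, on which completeness rests. So your argument is acceptable as a citation, but it would be circular as a derivation from the axioms. The direct route is just as short:
\begin{itemize}
\item[(1)] We have $x\wedge y=x\odot(x\odot y^\ast)^\ast=x\ominus(x\ominus y)$, the identity the paper itself uses in Proposition~\ref{Pro3.9}. So the join formula gives $(x\wedge y)\oplus(x\ominus y)=x\vee(x\ominus y)$. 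This equals $x$ because $(x\ominus y)^\ast\oplus x=x^\ast\oplus y\oplus x=1$, i.e.\ $x\ominus y\le x$.
\item[(2)] The definition of $\le$ immediately gives residuation: $a\odot b\le c$ iff $b\le a^\ast\oplus c$. Hence $z\odot(\cdot)$ is monotone, which yields $(z\odot x)\vee(z\odot y)\le z\odot(x\vee y)$. Conversely, put $w=(z\odot x)\vee(z\odot y)$. Residuation gives $x,y\le z^\ast\oplus w$, so $x\vee y\le z^\ast\oplus w$, and therefore $z\odot(x\vee y)\le w$.
\end{itemize}
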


\begin{definition}\cite[Definition 4.1]{Chang}\label{Def2}
 A subset $I$ of an MV-algebra $A$ is called an ideal of $A$ if
 \begin{enumerate}
 \item[(1)] $0\in I$;
 \item[(2)] $x\oplus y\in I$ for each $x,y\in I$;
 \item[(3)] $c\in I$ implies that $z\in I$ for any $z\in A$ with $z\leq c$.
 \end{enumerate}
\end{definition}

Let $A$ be an MV-algebra. Recall that the {\bf distance function} $d:A\times A\rightarrow A$ is defined by $d(x,y)=(x\ominus y)\oplus(y\ominus x)$,
where $x\ominus y=x\odot y^\ast$(\cite[Definition 1.2.4]{CD}
or \cite[ Definition 2.0.11]{Mun}).

\begin{lemma}(\cite[Proposition 1.2.6]{CD}
or \cite[Proposition 2.0.13]{Mun})
 Let $I$ be an ideal of an MV-algebra $A$. Define an binary relation $\equiv_I$ on $A$ by: $x\equiv_I y\Leftrightarrow d(x,y)\in I$. Then $\equiv_I$ is
  a congruence. Moreover, $I=\{x\in A: x\equiv_I 0\}$.
\end{lemma}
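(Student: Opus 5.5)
The strategy is to check the claim piece by piece: $\equiv_I$ is an equivalence relation, it is compatible with $\oplus$ and $\ast$, and its class of $0$ is exactly $I$. Since the lemma is quoted from \cite{CD,Mun}, I would simply follow the standard route, which rests on the basic properties of the distance function.

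\textbf{Equivalence relation and the class of $0$.} Reflexivity follows from Lemma \ref{Lema1}(3): $x\ominus x=x\odot x^\ast=0$, so $d(x,x)=0\in I$. Symmetry is immediate from the commutativity of $\oplus$ in the definition of $d$. Next I would compute $d(x,0)$. We have $x\ominus 0=x\odot 1=x$ and $0\ominus x=0$, so $d(x,0)=x$. Hence $x\equiv_I 0$ iff $x\in I$, which is the final assertion.

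\textbf{Triangle inequality and transitivity.} The central inequality is $d(x,z)\le d(x,y)\oplus d(y,z)$. For this I would first show $x\ominus z\le (x\ominus y)\oplus(y\ominus z)$. Equivalently, in the MV-order, $x\odot z^\ast\odot\big((x\odot y^\ast)\oplus(y\odot z^\ast)\big)^\ast=0$. I would verify this by the standard residuation argument: $x\le (x\ominus y)\oplus y$ holds by Lemma \ref{Lema1}(1), and likewise $y\le (y\ominus z)\oplus z$. Combining these gives $x\le (x\ominus y)\oplus(y\ominus z)\oplus z$, and hence $x\ominus z\le (x\ominus y)\oplus(y\ominus z)$, using the adjunction $a\ominus b\le c\iff a\le c\oplus b$. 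The symmetric inequality for $z\ominus x$ is proved the same way. Adding the two, and using that $\oplus$ is monotone and that $a\oplus a'\oplus b\oplus b'$ regroups, yields the triangle inequality. Transitivity then follows from Definition \ref{Def2}(2),(3): if $d(x,y)\in I$ and $d(y,z)\in I$, their sum lies in $I$, and $d(x,z)$ lies below it.

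\textbf{Compatibility with the operations.} For $\ast$, note that $x^\ast\ominus y^\ast=x^\ast\odot y=y\ominus x$. Hence $d(x^\ast,y^\ast)=d(x,y)$, and compatibility is immediate. For $\oplus$, I would prove $d(x\oplus z,y\oplus z)\le d(x,y)$, which reduces to $(x\oplus z)\ominus(y\oplus z)\le x\ominus y$. Again by adjunction, this amounts to $x\oplus z\le (x\ominus y)\oplus y\oplus z$, which follows from $x\le (x\ominus y)\oplus y$ and monotonicity. If $x\equiv_I x'$ and $y\equiv_I y'$, the triangle inequality gives
\[d(x\oplus y,x'\oplus y')\le d(x\oplus y,x'\oplus y)\oplus d(x'\oplus y,x'\oplus y')\le d(x,x')\oplus d(y,y')\in I,\]
and downward closure finishes the argument.

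\textbf{Main obstacle.} The main obstacle is justifying the order-theoretic tools used above: monotonicity of $\oplus$ and the adjunction $a\ominus b\le c\iff a\le c\oplus b$. Neither is listed among the earlier results. I would derive them from (MV1)--(MV4) and the definition of $\le$, or simply cite \cite[Lemma 1.1.4]{CD}. Everything else is routine bookkeeping.
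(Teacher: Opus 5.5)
Your proposal is correct and is essentially the standard argument behind this lemma: the paper gives no proof of its own and simply cites \cite[Proposition 1.2.6]{CD}, where the congruence property is derived from the same distance-function facts you establish ($d(x,x)=0$, symmetry, the triangle inequality, $d(x^\ast,y^\ast)=d(x,y)$, compatibility with $\oplus$), together with $d(x,0)=x$ for the final assertion. The tools you list as the main obstacle, monotonicity of $\oplus$ and the residuation $a\ominus b\le c\iff a\le c\oplus b$, are standard basic lemmas in \cite{CD}, so nothing is missing.
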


Let $I$ be an ideal of an MV-algebra $A$. Given $x\in I$, the equivalent class of $x$ with respect to $\equiv_I$ will be denoted by $x/I$ and the
quotient set $A/\equiv_I$ by $A/I$. Since $\equiv_I$ is a congruence, defining on the set $A/I$ the operations:

\[(x/I)^\ast =x^\ast/I \text{~and~} x/I\oplus y/I=(x\oplus y)/I\]
the structure $(A/I, \oplus,\ast, 0/I)$ becomes an MV-algebra, called the {\bf quotient algebra} of $A$ by the ideal $I$. Moreover, the correspondence
$x\mapsto x/I$ defines a homomorphism $\rho_I$ from $A$ onto the quotient algebra $A/I$, which is called the {\bf natural homomorphism} from $A$
onto $A/I$ \cite{CD,Mun}.

Let \( A \) and \( B \) be $MV$-algebras. Recall that a mapping \( f : A \to B \) is a \textbf{homomorphism} \cite{CD} if it satisfies the following conditions: for all \( x, y \in A \),

\begin{enumerate}
    \item[(i)] \( f(0) = 0 \);
    \item[(ii)] \( f(x \oplus y) = f(x) \oplus f(y) \);
    \item[(iii)] \( f(x^*) = (f(x))^* \).
\end{enumerate}

If \( f \) is one-to-one, we say that \( f \) is \textbf{injective}, or an \textbf{embedding}. If \( f \) is onto, we say that \( f \) is \textbf{surjective}. By an \textbf{isomorphism}, we mean a surjective and one-to-one homomorphism. We write \( A \cong B \) if there exists an isomorphism from \( A \) onto \( B \).

\begin{definition}\cite{Hoo}
Let $A$ be an $MV$-algebra with a topology $\tau$. Then $(A,\tau )$ is called a topological MV-algebra if the operations $\oplus$ and $\ast$ are continuous.
\end{definition}
 Let $A$ be an $MV$-algebra. Given \( a \in A \) and \( U \subseteq A \), denote by

\[
U(a) = \{ x \in A \mid a \ominus x \in U \text{ and } x \ominus a \in U \};
\]
\[
U^{(a)} = \{ x \in A \mid a \lor x \in U \text{ and } a \ominus x \in U \}.
\]

It is clear that \( U(a) \subseteq V(a) \) and \( U^{(a)} \subseteq V^{(a)} \) whenever \( U \subseteq V \subseteq A \).

The following result shows that the topology \( \tau \) of a topological $MV$-algebra \( (A, \tau) \) is uniquely determined by its \( 0 \)-neighborhood system.

\begin{proposition}\label{Pro1}\cite[Proposition 3.5]{GLD}
\label{prop:3.5}
Let \((A, \tau)\) be a topological $MV$-algebra and \(\mathcal{V}\) an open neighborhood base of \(0\). Then \(\mathcal{V}\) satisfies the following conditions:

\begin{enumerate}
    \item[(i)] \(0 \in U\) for every \(U \in \mathcal{V}\);
    \item[(ii)] for every \(U, V \in \mathcal{V}\) and \(a, b \in A\), if \(U(a) \cap V(b) \neq \emptyset\), then for each \(y \in U(a) \cap V(b)\), there exists \(W \in \mathcal{V}\) such that \(W(y) \subseteq U(a) \cap V(b)\);
    \item[(iii)] for every \(U \in \mathcal{V}\) and \(a, b \in A\), there exists \(V \in \mathcal{V}\) such that \(V(a) \oplus V(b) \subseteq U(a \oplus b)\).
\end{enumerate}

Conversely, let \(A\) be an MV-algebra, and let \(\mathcal{V}\) be a family of subsets of \(A\) satisfying conditions (i), (ii) and (iii). Then the family
\[
B_{\mathcal{V}} = \{ U(x) \mid x \in A,\; U \in \mathcal{V} \}
\]
is a base for a topology on \(A\). With this topology, \(A\) is a topological $MV$-algebra.
\end{proposition}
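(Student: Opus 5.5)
We need to prove Proposition~\ref{Pro1}, which has two directions. The necessity part concerns a topological MV-algebra $(A,\tau)$ with an open base $\mathcal V$ at $0$; the sufficiency part builds a topology from an abstract family $\mathcal V$.

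The first step is to show that for each $a\in A$ the family $\{U(a): U\in\mathcal V\}$ is a neighbourhood base at $a$ consisting of open sets.

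\begin{itemize}
\item \emph{Openness.} The maps $x\mapsto a\ominus x=a\odot x^\ast$ and $x\mapsto x\ominus a$ are continuous, since $\odot$ is expressed through $\oplus$ and $\ast$. Hence $U(a)$, being the intersection of the preimages of $U$ under these two maps, is open.
\item \emph{Membership.} $a\in U(a)$, because $a\ominus a=a\odot a^\ast=0\in U$ by Lemma~\ref{Lema1}(3).
\item \emph{Base property.} Let $O$ be an open set containing $a$. Consider $h(x,y)=(x\ominus y)\oplus(y\ominus x)$. A cleaner route is the identity $x=(x\wedge a)\oplus(x\ominus a)$ of Lemma~\ref{Lema1}(1). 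Using it, I would show that the map $(s,t)\mapsto (a\ominus s)\oplus t$ — or more symmetrically $(a\odot s^\ast)\oplus t$ composed suitably — recovers $x$ from $s=a\ominus x$ and $t=x\ominus a$. Concretely, check that $x=(a\ominus(a\ominus x))\oplus(x\ominus a)$ holds in any MV-algebra. Here $a\ominus(a\ominus x)=a\wedge x$, so the identity is exactly Lemma~\ref{Lema1}(1).
\item \emph{Concluding the base property.} The map $g(s,t)=(a\ominus s)\oplus t$ is continuous with $g(0,0)=a$. So there is $U\in\mathcal V$ with $g(U\times U)\subseteq O$. Then $x\in U(a)$ gives $x=g(a\ominus x,x\ominus a)\in O$.
\end{itemize}

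With this in hand, the necessity conditions follow quickly. Condition (i) is trivial. For (ii), $U(a)\cap V(b)$ is open, so it contains some basic neighbourhood $W(y)$ of each of its points $y$. For (iii), continuity of $\oplus$ at $(a,b)$ yields basic neighbourhoods $V_1(a)$ and $V_2(b)$ with $V_1(a)\oplus V_2(b)\subseteq U(a\oplus b)$. Since $\mathcal V$ is a base at $0$, pick $V\in\mathcal V$ with $V\subseteq V_1\cap V_2$; monotonicity of $U\mapsto U(a)$ then gives $V(a)\oplus V(b)\subseteq U(a\oplus b)$.

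For sufficiency, condition (ii) is precisely the base intersection axiom, and (i) ensures $x\in U(x)$, so the sets $U(x)$ cover $A$. Thus $B_{\mathcal V}$ is a base for a topology. Continuity of $\oplus$ at $(a,b)$ is immediate from (iii). It remains to handle the continuity of $\ast$.

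\begin{itemize}
\item Observe that $a^\ast\ominus x^\ast=a^\ast\odot x=x\ominus a$ and $x^\ast\ominus a^\ast=x^\ast\odot a=a\ominus x$. Hence $(U(a))^\ast=U(a^\ast)$ exactly, and $\ast$ is continuous, indeed a homeomorphism.
\item For $\oplus$, a basic neighbourhood of $a\oplus b$ containing the point need not be of the form $U(a\oplus b)$; it may be some $U'(c)$ with $a\oplus b\in U'(c)$. By (ii) applied with $U=V=U'$ and $a=b=c$, there is $W$ with $W(a\oplus b)\subseteq U'(c)$. Then (iii) gives $V$ with $V(a)\oplus V(b)\subseteq W(a\oplus b)$.
\end{itemize}

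I expect the main obstacle to be the base property in the necessity direction: that small $U(a)$ are contained in a given neighbourhood of $a$. It hinges on the identity $a\ominus(a\ominus x)=a\wedge x$. This holds because $a\odot(a\odot x^\ast)^\ast=a\odot(a^\ast\oplus x)=a\wedge x$ by the definition of the meet, so Lemma~\ref{Lema1}(1) closes the argument.
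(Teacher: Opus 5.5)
Your proof is correct and complete. The paper gives no proof of this proposition; it is quoted from \cite[Proposition 3.5]{GLD}. So there is no internal argument to compare yours against, and what you supply is a self-contained proof.

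The most valuable part is your first step. You use the identity $x=(a\ominus(a\ominus x))\oplus(x\ominus a)$, i.e.\ $a\ominus(a\ominus x)=a\wedge x$ combined with Lemma~\ref{Lema1}(1). This is the same manipulation the paper performs in the proof of Proposition~\ref{Pro3.9}. Together with continuity of $g(s,t)=(a\ominus s)\oplus t$ at $(0,0)$, it shows that $\{U(a)\mid U\in\mathcal V\}$ is an open neighbourhood base at $a$ for the \emph{original} topology $\tau$.

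This is strictly more than the necessity half as stated, which only lists (i)--(iii) and never asserts that $B_{\mathcal V}$ generates $\tau$. Yet the paper relies on exactly this fact in several places:
\begin{itemize}
\item when it claims that $\tau$ is determined by its $0$-neighbourhood system;
\item when it says in the proof of Proposition~\ref{Pro3.9} that $W(a)$ is a neighbourhood of $a$ ``by Proposition~\ref{Pro1}'';
\item when it uses in Theorem~\ref{the1}, Theorem~\ref{the2} and Corollary~\ref{cor1} that the sets $V(x)$ form a neighbourhood base at $x$.
\end{itemize}

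Your sufficiency half is also right. The equality $(U(a))^\ast=U(a^\ast)$ makes $\ast$ a homeomorphism. Using (ii) to shrink an arbitrary basic neighbourhood $U'(c)$ of $a\oplus b$ to some $W(a\oplus b)$ before invoking (iii) is exactly the step needed for continuity of $\oplus$.

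Two cosmetic remarks. First, drop the abandoned aside about $h(x,y)=(x\ominus y)\oplus(y\ominus x)$. Second, the converse tacitly requires $\mathcal V\neq\emptyset$: for $\mathcal V=\emptyset$, conditions (i)--(iii) hold vacuously but $B_{\mathcal V}$ does not cover $A$. Your covering step uses this assumption implicitly.
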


\section {Main results}

{\bf Construction of Hartman--Mycielski:} Let $(A,\oplus,\ast, 0)$ be an MV-algebra and let $J = [0, 1)$. A function $f : J\rightarrow A$ is a step function
if there are real numbers $b_0, b_1,\ldots, b_n$ such that $0 = b_0 < b_1 < \cdots< b_n = 1$ and $f$ is constant on
$[b_k, b_{k+1})$ for all $k = 0,1,\ldots,n-1$.
 Henceforward, when we say that $B=\{b_0, b_1,\ldots, b_n\}$ is a partition
of $J$, we include the condition that $0 = b_0 < b_1 < \cdots< b_n = 1$. Denote by $A^\bullet$ the set of all step
functions. Define two operations $\oplus$ and $\ast$ on $A^\bullet$ by
$$( f \oplus g)(r) = f (r) \oplus g(r), \quad r\in J\quad\quad\quad(1)$$
$$f^\ast(r) = (f (r))^\ast, \quad r\in J\quad\quad\quad \quad\quad \quad\quad(2)$$
for all $f,g \in A^\bullet$. Let $f, g \in A^\bullet$. It is easy to see that both $f \oplus g$ and $f^\ast$ are again step functions.

\begin{proposition}\label{Pro2}
For every MV-algebra $(A,\oplus,\ast, 0)$, $(A^\bullet,\oplus,\ast, 0^\bullet)$ forms an MV-algebra in Construction of Hartman--Mycielski, where $0^\bullet(r)=0$ for each $r\in J=[0,1)$.
\end{proposition}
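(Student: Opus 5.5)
The plan is to verify the axioms (MV1)--(MV4) of Definition \ref{Def:1} for $(A^\bullet,\oplus,\ast,0^\bullet)$ pointwise. Two preliminary facts are needed. First, the operations are well defined, i.e.\ $A^\bullet$ is closed under $\oplus$ and $\ast$. Second, $0^\bullet$ is a step function, which is clear: take the trivial partition $\{0,1\}$.

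For closure, I would argue as follows. Suppose $f$ is constant on the intervals of a partition $B_1$ and $g$ on those of a partition $B_2$. Then $B=B_1\cup B_2$, reordered increasingly, is again a partition of $J$ with $0$ and $1$ as endpoints. Each interval $[b_k,b_{k+1})$ of $B$ lies inside an interval of $B_1$ and inside an interval of $B_2$, so both $f$ and $g$ are constant there. Hence $f\oplus g$ is constant on each interval of $B$, by (1). For the involution, $f^\ast$ is constant on each interval of $B_1$, by (2). This common-refinement observation is the only point that is not a direct transcription of the axioms of $A$. It is also the only step needing real care, and it is mild.

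The axioms then follow by evaluating at each $r\in J$ and using the corresponding axiom in $A$.
\begin{itemize}
\item[(MV1)] Associativity and commutativity of $\oplus$ hold pointwise. Moreover $(f\oplus 0^\bullet)(r)=f(r)\oplus 0=f(r)$.
\item[(MV2)] $f^{\ast\ast}(r)=(f(r))^{\ast\ast}=f(r)$.
\item[(MV3)] First, $(0^\bullet)^\ast(r)=0^\ast=1$ for every $r$. Then $(f\oplus (0^\bullet)^\ast)(r)=f(r)\oplus 0^\ast=0^\ast=(0^\bullet)^\ast(r)$.
\item[(MV4)] Unwinding (1) and (2) repeatedly gives $\big((f^\ast\oplus g)^\ast\oplus g\big)(r)=(f(r)^\ast\oplus g(r))^\ast\oplus g(r)$. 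This equals $(g(r)^\ast\oplus f(r))^\ast\oplus f(r)$ by (MV4) in $A$, which in turn is $\big((g^\ast\oplus f)^\ast\oplus f\big)(r)$.
\end{itemize}
Since two functions on $J$ agreeing at every $r$ are equal, each identity holds in $A^\bullet$. The type of $(A^\bullet,\oplus,\ast,0^\bullet)$ is $(2,1,0)$, so it is an MV-algebra.

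Equivalently, $A^\bullet$ is the subset of the direct power $A^J$ consisting of step functions. It contains the constant $0$ and is closed under the operations. Therefore it is a subalgebra of $A^J$, and MV-algebras form a variety. I would mention this as the conceptual reason, but give the pointwise check above to keep the argument self-contained.
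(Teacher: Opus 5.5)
Your proposal is correct and follows the paper's proof: the paper likewise checks (MV1)--(MV4) pointwise from the corresponding axioms in $A$, and it only asserts as ``easy to see'' the closure of step functions under $\oplus$ and $\ast$, which you justify with the common refinement. Your remark that $A^\bullet$ is a subalgebra of the direct power $A^J$, and hence lies in the variety of MV-algebras, is a valid shortcut that the paper does not mention.
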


\begin{proof}
Let us show that $A^\bullet$ satisfies the conditions (MV1)-(MV4) in Definition \ref{Def:1}. Since $(A,\oplus, 0)$ is a commutative monoid, one can easily show that so
is $(A^\bullet,\oplus, 0^\bullet)$.

For (MV2), take any $f\in A^\bullet$. According to the definition of the operation $\ast$ on $A^\bullet$, for each $r\in J$ we have that
 \[f^{\ast\ast}(r)=(f^\ast(r))^\ast=((f(r))^\ast)^\ast=f(r)^{\ast\ast}=f(r),\]
 since $A$ satisfies the condition (MV2) in Definition \ref{Def:1}. This implies that $f^{\ast\ast}=f$.

For (MV3), take any $f\in A^\bullet$. Then, for each $r\in J$,
\begin{align*}
(f\oplus (0^\bullet)^\ast)(r)&=f(r)\oplus (0^\bullet)^\ast(r)
\\&=f(r)\oplus (0^\bullet(r))^\ast
\\&=f(r)\oplus 0^\ast
\\&=0^\ast
\\&=(0^\bullet)^\ast(r),
\end{align*}
which implies that $f\oplus (0^\bullet)^\ast=(0^\bullet)^\ast$.

For (MV4), take any $f,g\in A^\bullet$. Then, for each $r\in J$,
\begin{align*}
((f^\ast\oplus g)^\ast\oplus g)(r)&=(f^\ast\oplus g)^\ast(r)\oplus g(r)
\\&=(f(r)^\ast\oplus g(r))^\ast\oplus g(r)
\\&=(g(r)^\ast\oplus f(r))^\ast\oplus f(r)
\\&=(g^\ast\oplus f)^\ast(r)\oplus f(r)
\\&=((g^\ast\oplus f)^\ast\oplus f)(r),
\end{align*}
which implies that $(f^\ast\oplus g)^\ast\oplus g=(g^\ast\oplus f)^\ast\oplus f$.
\end{proof}

Let $A$ be a topological $MV$-algebra. Using the sufficient conditions of Proposition \ref{Pro1}, we can topologize the $A^\bullet$ which is a connected, locally connected
topological MV-algebra. Given an open neighborhood $U$ of $0$ in $A$ and a real number $\varepsilon> 0$, define

$$O(U,\varepsilon)=\{f\in A^\bullet\mid \mu\{r\in J\mid f(r)\notin U\} <\varepsilon\},$$
where $\mu$ is the Lebesgue measure on the real line.

\begin{proposition}\label{Pro3}
Let $A$ be a topological $MV$-algebra and $\mathcal{U}$ an open neighborhood base of $0$. Put $\mathcal{V}=\{O(U,\varepsilon)\mid U\in \mathcal{U},\varepsilon>0\}$. Then, the family
$$\mathcal{B}=\{O(U,\varepsilon)(f)\mid O(U,\varepsilon)\in\mathcal{V}, f\in A^\bullet\}$$
forms a base of a topology on $A^\bullet$, and $A^\bullet$ with the topology generated by $\mathcal{B}$ becomes a topological $MV$-algebra.
\end{proposition}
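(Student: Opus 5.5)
The plan is to verify that $\mathcal{V}$ satisfies conditions (i)--(iii) of Proposition \ref{Pro1} for the MV-algebra $A^\bullet$ of Proposition \ref{Pro2}; the converse part of Proposition \ref{Pro1} then gives both claims. Condition (i) is immediate: $0^\bullet(r)=0\in U$ for all $r$, so the measure of the bad set is $0<\varepsilon$. Throughout I will use that for $f,g\in A^\bullet$ the operations $\ominus$, $\odot$, $\vee$ are computed pointwise, because they are term-defined from $\oplus$ and $\ast$. Hence
\[
O(U,\varepsilon)(f)=\{g\in A^\bullet \mid \mu(\{r: f(r)\ominus g(r)\notin U\}\cup\{r: g(r)\ominus f(r)\notin U\})\ \text{``small''}\},
\]
more precisely $g\in O(U,\varepsilon)(f)$ iff $\mu\{r: f(r)\ominus g(r)\notin U\}<\varepsilon$ and $\mu\{r: g(r)\ominus f(r)\notin U\}<\varepsilon$. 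Also, all sets involved are finite unions of intervals, since step functions share a common refinement partition.

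For (iii), take $O(U,\varepsilon)$ and $f,g\in A^\bullet$. Refine to a partition $B$ on whose intervals both $f$ and $g$ are constant, say with values $a_k,b_k$. For each $k$ apply (iii) of Proposition \ref{Pro1} in $A$ to get $V_k\in\mathcal{U}$ with $V_k(a_k)\oplus V_k(b_k)\subseteq U(a_k\oplus b_k)$. Set $V=\bigcap_k V_k$, or rather a basic member of $\mathcal{U}$ inside it, which exists since the intersection is finite. Then take $O(V,\varepsilon/4)$. If $h\in O(V,\varepsilon/4)(f)$ and $h'\in O(V,\varepsilon/4)(g)$, then outside a set of measure $<\varepsilon$ we have $h(r)\in V(f(r))$ and $h'(r)\in V(g(r))$. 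Hence $h(r)\oplus h'(r)\in U(f(r)\oplus g(r))$ there, which gives both required measure bounds for $h\oplus h'\in O(U,\varepsilon)(f\oplus g)$.

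For (ii), which I expect to be the main obstacle, let $h\in O(U,\varepsilon)(f)\cap O(V,\delta)(g)$. Define the bad-set measures $m_1=\mu\{r: f(r)\ominus h(r)\notin U\}$ and $m_2=\mu\{r: h(r)\ominus f(r)\notin U\}$, both $<\varepsilon$, and similarly $m_3,m_4<\delta$ for $g$. Let $\eta$ be less than all the slacks $\varepsilon-m_i$ and $\delta-m_j$. On each interval of a common partition for $f,g,h$, either $h(r)\in U(f(r))$ or not. Where it holds, the point $h(r)$ lies in the open set $U(f(r))$; here I use that $U(a)$ is open in $A$, which follows from continuity of $\ominus$ and openness of $U$. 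The same holds for $V(g(r))$. Applying (ii) of Proposition \ref{Pro1} in $A$ pointwise, intersecting over finitely many values, and shrinking to a basic neighbourhood gives $W\in\mathcal{U}$ with $W(h(r))\subseteq U(f(r))$ wherever $h(r)\in U(f(r))$, and likewise for $V(g(r))$. Then set $O(W,\eta/2)$.

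Now take $k\in O(W,\eta/2)(h)$. Then $k(r)\in W(h(r))$ off a set of measure $<\eta$. On that set $k$ can only be bad for $f$ where $h$ was already bad, apart from this small extra set, so the bad measure for $k$ relative to $f$ is $<m_i+\eta<\varepsilon$, and similarly for $g$. The delicate point is the one-sided bookkeeping: membership in $U(a)$ requires two conditions, and the measure conditions are imposed separately. So I must treat the sets where exactly one of $f(r)\ominus h(r)\in U$ and $h(r)\ominus f(r)\in U$ holds. I will handle this by applying the openness argument to each of the two open sets $\{x: f(r)\ominus x\in U\}$ and $\{x: x\ominus f(r)\in U\}$ separately, finding $W$ so that $W(h(r))$ lies in each one that contains $h(r)$. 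This uses continuity of $\ominus$ at $h(r)$ together with the fact that $W(h(r))$ for $W\in\mathcal{U}$ form a neighbourhood base at $h(r)$ (Proposition \ref{Pro1}). With finitely many values this is a finite intersection, and the measure estimate goes through.
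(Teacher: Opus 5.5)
Your proposal is correct and follows the paper's proof. Both verify conditions (i)--(iii) of Proposition \ref{Pro1} for $\mathcal{V}$ on a common refinement of the partitions, pick a basic neighbourhood inside a finite intersection taken over the finitely many values, and bound the one-sided bad sets by measure. In particular, your separate treatment of $\{x: a\ominus x\in U\}$ and $\{x: x\ominus a\in U\}$ in (ii) is exactly what the paper does through $N_1,N_2$. The differences are minor: you verify (ii) directly for an intersection $O(U,\varepsilon)(f)\cap O(V,\delta)(g)$, whereas the paper checks a single basic set and leaves the reduction implicit; and in (iii) you invoke condition (iii) of Proposition \ref{Pro1} in $A$ instead of rederiving it from the continuity of $G_k,F_k$.
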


\begin{proof}
According to Proposition \ref{Pro2} $(A^\bullet,\oplus,\ast, 0^\bullet)$ forms an $MV$-algebra. To show that $A$ with the topology generated by $\mathcal{B}$ becomes a topological $MV$-algebra, it is enough to show that $\mathcal{V}$ satisfies the conditions $(i)$-$(iii)$ in Proposition \ref{Pro1}.

Since $0^\bullet(r)=0$ holds for each $r\in J$, $0^\bullet\in O(U,\varepsilon)$ holds for each $U\in \mathcal{U}$ and $\varepsilon>0$. Thus,  $\mathcal{V}$ satisfies the condition $(i)$.

For $(ii)$. Take an  $O(U,\varepsilon)\in \mathcal{V}$ and fix $g\in A^\bullet $. For $f\in O(U,\varepsilon)(g)$, since  \[
 O(U,\varepsilon)(g)=\{l\in A^\bullet\mid g\ominus l\in O(U,\varepsilon) \text{~and~} l\ominus g\in O(U,\varepsilon)\},
  \]
  we have that
  \[
  \mu\big(\{t\in J\mid (g\ominus f)(t)\notin U\}\big)<\varepsilon,\qquad
  \mu\big(\{t\in J\mid (f\ominus g)(t)\notin U\}\big)<\varepsilon.
  \]
Let \(0=a_0<a_1<\dots<a_n=1\) be a partition of $J$ such that for each $1\leq k\leq n$ both $f$ and $g$ are constant on each interval \(J_k=[a_{k-1},a_k)\) and are equal to \(x_k,y_k\in A\) on this interval, respectively. Put
\[
E_1=\bigcup_{\substack{k\\ x_k\ominus y_k\notin U}} J_k,\qquad
E_2=\bigcup_{\substack{k\\ y_k\ominus x_k\notin U}} J_k.
\]
Then
\[
\mu(E_1)=\mu\big(\{t\in J\mid (f\ominus g)(t)\notin U\}\big)<\varepsilon,\quad
\mu(E_2)=\mu\big(\{t\in J\mid (g\ominus f)(t)\notin U\}\big)<\varepsilon.
\]

Put

\[
N_1=\{1 \leq k\leq n\mid x_k\ominus y_k\in U\},\qquad N_2=\{1 \leq k\leq n\mid y_k\ominus x_k\in U\}.
\]

 Fix a $k\in N_1$. Since $\ominus$ is continuous and $x_k\ominus y_k\in U$, there is an open neighborhood $W_k^1\in \mathcal{U}$ such that $x \ominus y_k\in U$ for each $x\in W_k^1(x_k)$. Similarly, for each $k\in N_2$ there is an open neighborhood $W_k^2\in \mathcal{U}$ such that $y_k\ominus x\in U$ for each $x\in W_k^2(x_k)$. Choose $W_0\in\mathcal{U}$ such that
\[
W_0 \subseteq \bigcap_{k\in N_1} W_k^1 \cap \bigcap_{k\in N_2} W_k^2
\]

Since $A$ is a topological $MV$-algebra, for $U$ there is an open neighborhood $W_1\in \mathcal{U}$ such that \(W_1\oplus W_1\subseteq U\). Choose $W\in\mathcal{U}$ such that
\[
W \subseteq W_0\cap W_1.
\]

Then \(W(x_k)\subseteq \{x\in A\mid x \ominus y_k\in U\}\) for each $k\in N_1$ and  \(W(x_k)\subseteq \{x\in A\mid y_k\ominus x\in U\}\) for each $k\in N_2$.

Put \[\delta = \frac{1}{2}\big(\varepsilon - \max(\mu(E_1),\mu(E_2))\big) >0.\] Since \(\mu(E_1),\mu(E_2)<\varepsilon\), we have that \(\delta>0\).

Take any \(h\in O(W,\delta)(f)\). Then
 \[
\mu\big(\{t\in J\mid (h\ominus f)(t)\notin W\}\big)<\delta,\qquad
\mu\big(\{t\in J\mid (f\ominus h)(t)\notin W\}\big)<\delta.
\]
Put
\[
F_1=\{t\in J\mid (h\ominus f)(t)\notin W\},\qquad F_2=\{t\in J\mid (f\ominus h)(t)\notin W\}.
\]
Then \(\mu(F_1)<\delta\) and \(\mu(F_2)<\delta\).

Take any \(t\notin E_1 \cup F_1 \cup F_2\). Since $t\notin E_1$, then there is a $k(t)\in N_1$ such that $t\in [a_{k(t)-1},a_{k(t)})$.
Since $t\notin F_1$, we obtain that \((h\ominus f)(t)=h(t)\ominus x_{k(t)}\in W\). Similarly, we have that \((f\ominus h)(t)=x_{k(t)}\ominus h(t)\in W\)
by $t\notin F_2$. This implies that $h(t)\in W(x_{k(t)})$. Since we have proved that \(W(x_{k}) \subseteq \{x\in A\mid x \ominus y_k\in U\}\) for each $k\in N_1$, we obtain that \(W(x_{k(t)}) \subseteq \{x\in A\mid x \ominus y_{k(t)}\in U\}\). This implies that \(h(t)\in \{x\in A\mid x \ominus y_{k(t)}\in U\}\). Thus, we obtain that $(h\ominus g)(t)=h(t)\ominus g(t)=h(t)\ominus y_{k(t)}\in U$. This implies that $t\notin \{j\in J\mid (h\ominus g)(j)\notin U\}$. Thus,
\[\{j\in J\mid (h\ominus g)(j)\notin U\}\subseteq E_1 \cup F_1 \cup F_2,\]
which implies that
\begin{align*}
\mu\big(\{j\in J\mid (h\ominus g)(j)\notin U\}\big)&\leq \mu \big(E_1 \cup F_1 \cup F_2\big)
\\&\leq\mu (E_1)+\mu (F_1)+\mu (F_2)
\\&< \mu(E_1)+2\delta\leq \mu(E_1)+\varepsilon-\mu(E_1)
\\&=\varepsilon.
\end{align*}
This implies that \[h\ominus g\in O(U,\varepsilon).\]

Take any \(t\notin E_2 \cup F_1 \cup F_2\). Since $t\notin E_2$, Then there is a $k(t)\in N_2$ such that $t\in [a_{k(t)-1},a_{k(t)})$. Similarly, we can easily show that $h(t)\in W(f(t))=W(x_{k(t)})$ by $t\notin F_1\cup F_2$. Since we have proved that \(W(x_k)\subseteq \{x\in A\mid y_k\ominus x\in U\}\) for each $k\in N_2$, we obtain that \(W(x_{k(t)}) \subseteq \{x\in A\mid y_{k(t)}\ominus x\in U\}\). This implies that \(h(t)\in \{x\in A\mid y_{k(t)}\ominus x\in U\}\). Thus, we obtain that $(g\ominus h)(t)=g(t)\ominus h(t)=y_{k(t)}\ominus h(t)\in U$. This implies that $t\notin \{j\in J\mid (g\ominus h)(j)\notin U\}$. Thus,
\[\{j\in J\mid (g\ominus h)(j)\notin U\}\subseteq E_2 \cup F_1 \cup F_2,\]
which implies that
\begin{align*}
\mu\big(\{j\in J\mid (g\ominus h)(j)\notin U\}\big)&\leq \mu \big(E_2 \cup F_1 \cup F_2\big)
\\&\leq\mu (E_2)+\mu (F_1)+\mu (F_2)
\\&< \mu(E_2)+2\delta\leq \mu(E_2)+\varepsilon-\mu(E_2)
\\&=\varepsilon.
\end{align*}
This implies that \[g\ominus h\in O(U,\varepsilon).\]

Noting that we have proved that \[h\ominus g\in O(U,\varepsilon).\] above, so we obtain that \(h\in O(U,\varepsilon)(g) \). This implies that \(O(W,\delta)(f)\subseteq O(U,\varepsilon)(g)\).

For $(iii)$, take any $f_1$, $f_2\in A^\bullet$ and $O(U,\varepsilon)\in \mathcal{V}$. Let \(0=a_0<a_1<\dots<a_n=1\) be a partition of $J$ such that for each $1\leq k\leq n$ both $f_1$ and $f_2$ are constant on each interval \(J_k=[a_{k-1},a_k)\) and are equal to \(x_k,y_k\in A\) on this interval, respectively. Put $d_k=x_k\oplus y_k$ for each $1\leq k\leq n$. For each $1\leq k\leq n$, consider the function $G_k:A\times A\rightarrow A$ defined as $G_k(x,y)=d_k \ominus(x\oplus y)$ for each $(x,y)\in A\times A$. From the fact that $A$ is a topological $MV$-algebra it follows that $G_k$ is continuous. Since $G_k(x_k,y_k)=d_k\ominus(x_k\oplus y_k)=0\in U$ and $U$ is open, there is an open neighbourhood $W_k^1\in \mathcal{U}$ such that \[G_k(x,y)=d_k\ominus(x\oplus y)\in U\]
holds for each $x\in W_k^1(x_k)$ and $y\in W_k^1(y_k)$. Similarly, consider the function $F_k:A\times A\rightarrow A$ defined as $F_k(x,y)=(x\oplus y)\ominus d_k$ for each $(x,y)\in A\times A$. We can find an open neighbourhood $W_k^2\in \mathcal{U}$ such that \[F_k(x,y)=(x\oplus y)\ominus d_k\in U\]
holds for each $x\in W_k^2(x_k)$ and $y\in W_k^2(y_k)$. Put $W_k= W_k^1\cap W_k^2 $. Then one can easily show that \[d_k\ominus(x\oplus y)\in U \text{ and }(x\oplus y)\ominus d_k\in U\]
hold for each $x\in W_k(x_k)$ and $y\in W_k(y_k).$

Put $W_1=\bigcap_{k=1}^{n}W_k$. Then one can easily show that \[d_k\ominus(x\oplus y)\in U \text{ and }(x\oplus y)\ominus d_k\in U\] hold for each $1\leq k\leq n$ and each $x\in W_1(x_k)$ and $y\in W_1(y_k),$ because $ W_1(x_k)\subseteq W_k(x_k)$ and $ W_1(y_k)\subseteq W_k(y_k)$ hold for each $1\leq k\leq n.$

Put $\delta_1=\frac{\varepsilon}{4}$. Then we have the following Claim :

{\bf Claim:} $O(W_1,\delta_1)(f_1)\oplus O(W_1,\delta_1)(f_2)\subseteq O(U,\varepsilon)((f_1\oplus f_2)).$

In fact, it is enough to show that \[(f_1\oplus f_2)\ominus (g_1\oplus g_2)\in O(U,\varepsilon) \text{~and~} (g_1\oplus g_2)\ominus (f_1\oplus f_2) \in O(U,\varepsilon)\] hold for each $g_1\in O(W_1,\delta_1)(f_1)$ and $g_2\in O(W_1,\delta_1)(f_2).$

Since $g_i\in O(W_1,\delta_1)(f_i)$ for $i=1,2$, we have that \[\mu \big( \{t\in J\mid (g_i\ominus f_i)(t)\notin W_1\}\big)< \delta_1 \text{~and~}\mu \big( \{t\in J\mid (f_i\ominus g_i)(t)\notin W_1\}\big)< \delta_1.\]
Put \[E_1=\{t\in J\mid (g_1\ominus f_1)(t)\notin W_1\}\cup \{t\in J\mid (f_1\ominus g_1)(t)\notin W_1\}\]
and \[E_2=\{t\in J\mid (g_2\ominus f_2)(t)\notin W_1\}\cup \{t\in J\mid (f_2\ominus g_2)(t)\notin W_1\}.\]
Let $E=E_1\cup E_2$. Then $\mu(E)\leq \mu(E_1)+\mu(E_2)<4\delta_1.$

Take any $t\notin E$. Then there is $1\leq k(t)\leq n$ such that $t\in [a_{k(t)-1},a_{k(t)})$. Then \[(g_i\ominus f_i)(t)\in W_1 \text{~and~} (f_i\ominus g_i)(t)\in W_1\] for $i=1,2$. This implies that
 \[g_i(t)\ominus f_i(t)\in W_1 \text{~and~} f_i(t)\ominus g_i(t)\in W_1,\]
which implies that \[g_1(t)\in W_1(f_1(t))=W_1(x_{k(t)}) \text{~and~} g_2(t)\in W_1(f_2(t))=W_1(y_{k(t)})\]
Thus, from the fact that  \[d_k\ominus(x\oplus y)\in U \text{~and~}(x\oplus y)\ominus d_k\in U\] hold for each $1\leq k\leq n$ and each $x\in W_1(x_k)$ and $y\in W_1(y_k),$  it follows that \[d_{k(t)}\ominus(g_1(t)\oplus g_2(t))\in U \text{~and~}(g_1(t)\oplus g_2(t))\ominus d_{k(t)}\in U\].

Since $t\in [a_{k(t)-1},a_{k(t)})$, we have that $f_1(t)=x_{k(t)}$ and $f_2(t)=y_{k(t)}$. Noting that $d_{k(t)}=x_{k(t)}\oplus y_{k(t)}$, so we have that
\[(f_1(t)\oplus f_2(t))\ominus(g_1(t)\oplus g_2(t))\in U \text{~and~}(g_1(t)\oplus g_2(t))\ominus (f_1(t)\oplus f_2(t))\in U.\]
That is,
\[((f_1\oplus f_2)\ominus(g_1\oplus g_2))(t)\in U \text{~and~}((g_1\oplus g_2)\ominus (f_1\oplus f_2))(t)\in U,\]
which implies that \[t\notin \{l\in J\mid ((f_1\oplus f_2)\ominus(g_1\oplus g_2))(l)\notin U \}\] and \[t\notin \{l\in J\mid ((g_1\oplus g_2)\ominus (f_1\oplus f_2))(l)\notin U\}.\]
Thus, \[\{l\in J\mid ((f_1\oplus f_2)\ominus(g_1\oplus g_2))(l)\notin U \}\subseteq E\text{~and~}\{l\in J\mid ((g_1\oplus g_2)\ominus (f_1\oplus f_2))(l)\notin U\}\subseteq E.\]
Then
\[\mu\big(\{l\in J\mid ((f_1\oplus f_2)\ominus(g_1\oplus g_2))(l)\notin U \}\big)\leq \mu(E)<4\delta_1=\varepsilon\]and\[\mu\big(\{l\in J\mid ((g_1\oplus g_2)\ominus (f_1\oplus f_2))(l)\notin U\}\big)\leq \mu(E)<4\delta_1=\varepsilon,\]
which implies that
\[(f_1\oplus f_2)\ominus(g_1\oplus g_2)\in O(U,\varepsilon)\text{~and~}(g_1\oplus g_2)\ominus (f_1\oplus f_2) \in O(U,\varepsilon).\]
\end{proof}

\begin{proposition}\label{Pro4}
The topological $MV$-algebra $A^\bullet$ constructed in Proposition \ref{Pro3} is pathwise
connected and locally pathwise connected.
\end{proposition}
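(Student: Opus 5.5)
The plan is the classical Hartman--Mycielski ``sliding'' argument, adapted to the base $\mathcal{B}$ of Proposition \ref{Pro3}. For $f,g\in A^\bullet$ and $s\in[0,1]$, define $h_s\in A^\bullet$ by $h_s(r)=g(r)$ for $r<s$ and $h_s(r)=f(r)$ for $r\ge s$. This is a step function: add the point $s$ to a common partition for $f$ and $g$. Then $h_0=f$ and $h_1=g$. I will show that $s\mapsto h_s$ is continuous from $[0,1]$ into $A^\bullet$, which gives pathwise connectedness.

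Fix $s_0\in[0,1]$ and a basic neighbourhood $O(U,\varepsilon)(h_{s_0})$. The key observation is that $h_s$ and $h_{s_0}$ agree outside the interval between $s$ and $s_0$. At every point $r$ where they agree we have $(h_s\ominus h_{s_0})(r)=x\ominus x=0\in U$, and likewise $(h_{s_0}\ominus h_s)(r)=0\in U$. Indeed $x\odot x^\ast=0$ by Lemma \ref{Lema1}(3), and $0\in U$. Hence
\[
\mu\{r\mid (h_s\ominus h_{s_0})(r)\notin U\}\le |s-s_0|
\quad\text{and}\quad
\mu\{r\mid (h_{s_0}\ominus h_s)(r)\notin U\}\le |s-s_0|.
\]
So $h_s\in O(U,\varepsilon)(h_{s_0})$ whenever $|s-s_0|<\varepsilon$. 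Since the sets $O(U,\varepsilon)(h)$ form a base, this proves continuity.

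For local path connectedness, I will show that every basic set $O(U,\varepsilon)(f)$ is itself path connected. This suffices, because by Proposition \ref{Pro1}(ii) every point of an open set has such a basic neighbourhood inside it. Take $g\in O(U,\varepsilon)(f)$ and use the same path $h_s$ from $f$ to $g$. It remains to check that each $h_s$ lies in $O(U,\varepsilon)(f)$. For $r\ge s$ we have $h_s(r)=f(r)$, so both $(h_s\ominus f)(r)=0$ and $(f\ominus h_s)(r)=0$ lie in $U$. For $r<s$ we have $h_s(r)=g(r)$. Thus
\[
\{r\mid (h_s\ominus f)(r)\notin U\}\subseteq\{r\mid (g\ominus f)(r)\notin U\},
\]
whose measure is $<\varepsilon$, and the same inclusion holds with $f\ominus h_s$ and $f\ominus g$. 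Hence $h_s\in O(U,\varepsilon)(f)$ for all $s$. The path is continuous by the first part, so $O(U,\varepsilon)(f)$ is path connected. Thus $A^\bullet$ is locally pathwise connected; here the base at $f$ is taken to be the family $\{O(U,\varepsilon)(f)\}$, which contains $f$.

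I expect the main obstacle to be bookkeeping rather than anything conceptual. One must ensure that the sets $O(U,\varepsilon)(f)$ genuinely form a neighbourhood base at $f$ in the topology generated by $\mathcal{B}$. This uses Proposition \ref{Pro1}(ii) together with $f\in O(U,\varepsilon)(f)$, which holds since $f\ominus f=0^\bullet$. Everything else reduces to the pointwise identity $x\ominus x=0$.
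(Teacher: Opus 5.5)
Your proof is correct and follows essentially the same route as the paper: a sliding path between two step functions, which stays in $O(U,\varepsilon)(f)$ because its bad sets are contained in those of $g$, and which is continuous because two values of the path differ only on a set of small measure. The only difference is that you move a single cut point across $[0,1)$, whereas the paper moves a cut point $b_{k,t}=a_k+t(a_{k+1}-a_k)$ inside every interval of a common partition simultaneously; your version is slightly simpler and gives the modulus $|s-s_0|<\varepsilon$ directly instead of the paper's cruder $\delta/n$.
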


\begin{proof}
Let $\mathcal{U}$ be an open neighborhood base of \(0\) in $A$. The local pathwise connectedness of $A^\bullet$ will follow if we show that each
set $O(U,\varepsilon)(g)$ is pathwise connected for each $U\in \mathcal{U} $, $g\in A^\bullet$ and $\varepsilon>0$. Take any $f\in O(U,\varepsilon)(g)$.
We claim that there is a continuous function $\varphi:[0,1]\rightarrow O(U,\varepsilon)(g)$ such that $\varphi(0)=g$ and $\varphi(1)=f$. According
to the definition of $A^\bullet$. There is a partition \(0=a_0<a_1<\dots<a_n=1\) of $J=[0,1)$ such that for each $1\leq i\leq n$ both $f$ and
$g$ are constant on each interval \(J_i=[a_{i-1},a_i)\) and are equal to \(x_i,y_i\in A\) on this interval, respectively. For each $t\in [0,1]$
and every non-negative $k<n$, set $b_{k,t}=a_k+t(a_{k+1}-a_k)$. Then $b_{k,0}=a_k$, $b_{k,1}=a_{k+1}$ for each non-negative $k<n$ and $a_{k}<b_{k,t}<a_{k+1}$
for each $t\in (0,1)$. Now we define a function $\varphi:[0,1]\rightarrow O(U,\varepsilon)(g)$ by $\varphi(0)=g$ and $\varphi(1)=f$ and, for $t\in (0,1)$
and $r\in [0,1)$,

\[\varphi(t)(r)=\left\{\begin {array}{r@{\quad \quad}l}f(r),& a_k\leq r<b_{k,t}\\
g(r),& b_{k,t}\leq r<a_{k+1}.\end
{array}\right.
\]

Firstly, we claim that $\varphi(t)\in O(U,\varepsilon)(g)$ holds for each $t\in [0,1]$. Indeed, fix non-negative $k<n$, put \[E_k=\{r\in [a_k,a_{k+1})\mid \varphi(t)(r)\ominus g(r)\notin U\}\]
and
\[F_k=\{r\in [a_k,a_{k+1})\mid g(r)\ominus\varphi(t)(r)\notin U\}.\]
Then, according to the definition of $\varphi(t)$ one can easily show that \[E_k\subseteq \{r\in [a_k,a_{k+1})\mid f(r)\ominus g(r)\notin U\}\]
and
\[F_k\subseteq \{r\in [a_k,a_{k+1})\mid g(r)\ominus f(r)\notin U\}.\]
This implies that \[\{r\in J\mid \varphi(t)(r)\ominus g(r)\notin U\}\subseteq \{r\in J\mid f(r)\ominus g(r)\notin U\}\]
and
\[\{r\in J\mid g(r)\ominus\varphi(t)(r)\notin U\}\subseteq \{r\in J\mid g(r)\ominus f(r)\notin U\},\]
which implies that\[\mu\big(\{r\in J\mid \varphi(t)(r)\ominus g(r)\notin U\}\big)\leq \mu \big(\{r\in J\mid f(r)\ominus g(r)\notin U\}\big)<\varepsilon\]
and
\[\mu\big(\{r\in J\mid g(r)\ominus\varphi(t)(r)\notin U\}\big)\leq \mu \big(\{r\in J\mid g(r)\ominus f(r)\notin U\}\big)<\varepsilon.\]
Thus, $\varphi(t)\ominus g\in O(U,\varepsilon)$ and $g\ominus\varphi(t)\in O(U,\varepsilon)$, which implies that \[\varphi(t)\in O(U,\varepsilon)(g).\]

Secondly, we claim that $\varphi$ is continuous. Take any $t\in [0,1]$ and any open neighbourhood $O(V,\delta)(\varphi(t))$ of $\varphi(t)$. We claim that $\varphi(s)\in O(V,\delta)(\varphi(t))$ whenever $\lvert s-t\rvert<\frac{\delta}{n},$ which implies that $\varphi$ is continuous. Without loss of generality, we assume that $s\leq t$. Fix a non-negative $k<n$. Then according to the definition of $\varphi(t)$ one can easily show that \[G_k=\{r\in [a_k,a_{k+1})\mid \varphi(s)(r)\neq \varphi(t)(r)\}\subseteq [b_{k,s},b_{k,t}),\]
where $b_{k,s}=a_k+s(a_{k+1}-a_k)$ and $b_{k,t}=a_k+t(a_{k+1}-a_k).$
Hence, \[O^1_k=\{r\in [a_k,a_{k+1})\mid \varphi(s)(r)\ominus \varphi(t)(r)\notin V\}\subseteq G_k \subseteq [b_{k,s},b_{k,t})\]
and
 \[O^2_k=\{r\in [a_k,a_{k+1})\mid \varphi(t)(r)\ominus \varphi(s)(r)\notin V\}\subseteq G_k \subseteq [b_{k,s},b_{k,t})\]
 This implies that
 \[\mu(O^1_k)\leq b_{k,t}-b_{k,s}=(t-s)((a_{k+1}-a_k))\leq(t-s)< \frac{\delta}{n} \]
 and
 \[\mu(O^2_k)\leq b_{k,t}-b_{k,s}=(t-s)((a_{k+1}-a_k))\leq(t-s)< \frac{\delta}{n}. \]
Thus, we obtain that
\[\mu\big(\{r\in J\mid \varphi(s)(r)\ominus \varphi(t)(r)\notin V\}\big)\leq\sum_{k=0}^{n-1}\mu(O^1_k)<n\times \frac{\delta}{n}=\delta\]
and
\[\mu\big(\{r\in J\mid \varphi(t)(r)\ominus\varphi(s)(r) \notin V\}\big)\leq\sum_{k=0}^{n-1}\mu(O^2_k)<n\times \frac{\delta}{n}=\delta\].
This implies that
$\varphi(s)\ominus \varphi(t)\in O(V,\delta)$ and $\varphi(t)\ominus \varphi(s) \in O(V,\delta),$
which implies that $\varphi(s)\in O(V,\delta)(\varphi(t))$.

The same argument applied to the whole MV-algebra $A^\bullet$ in place of $O(U,\varepsilon)$ implies the pathwise connectedness of $A^\bullet$.
\end{proof}

\begin{theorem}\label{the1}
For every Hausdorff topological MV-algebra $A$, there exists a natural topological
isomorphism $i_A:A\rightarrow A^\bullet$ of $A$ onto a closed subalgebra of the pathwise connected, locally
pathwise connected topological MV-algebra $A^\bullet$.
\end{theorem}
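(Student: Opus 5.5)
Define $i_A:A\rightarrow A^\bullet$ by letting $i_A(a)$ be the constant function with value $a$ on $J$ (a step function for the trivial partition $\{0,1\}$). Since the operations (1) and (2) are pointwise, $i_A(a\oplus b)=i_A(a)\oplus i_A(b)$, $i_A(a^\ast)=i_A(a)^\ast$ and $i_A(0)=0^\bullet$. Clearly $i_A$ is injective, so $i_A$ is an algebraic isomorphism of $A$ onto the subalgebra $C=i_A(A)$ of constant functions. By Proposition \ref{Pro4}, $A^\bullet$ is pathwise connected and locally pathwise connected. It remains to show that $i_A$ is a homeomorphism onto $C$ and that $C$ is closed.

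\emph{Topological embedding.} For $a,x\in A$ and $U\in\mathcal{U}$, the set $\{r\in J\mid (i_A(x)\ominus i_A(a))(r)\notin U\}$ is either empty or all of $J$, according as $x\ominus a\in U$ or not. Hence, for any $0<\varepsilon\le 1$,
\[
i_A^{-1}\big(O(U,\varepsilon)(i_A(a))\big)=U(a),
\qquad
O(U,\varepsilon)(i_A(a))\cap C=i_A\big(U(a)\big).
\]
By Proposition \ref{Pro1}, the sets $U(a)$ with $U\in\mathcal{U}$ form a neighbourhood base at $a$ in $A$. Likewise, by Proposition \ref{Pro3}, the sets $O(U,\varepsilon)(i_A(a))$ form a neighbourhood base at $i_A(a)$ in $A^\bullet$. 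So $i_A$ is continuous and open onto $C$, and is therefore a topological isomorphism onto $C$.

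\emph{Closedness.} This is the step I expect to be the main obstacle, and it is where the Hausdorff hypothesis enters. Let $f\in A^\bullet\setminus C$. Then $f$ takes two distinct values $x\ne y$ on intervals $I_1,I_2$ of a partition, with lengths $\ell_1,\ell_2>0$. Because the $T_i$ axioms are equivalent for topological MV-algebras \cite{GLD}, $A$ is regular, so I would choose $U\in\mathcal{U}$ with $U(x)\cap U(y)=\emptyset$. Rather than using this directly, I would shrink $U$ to some $W\in\mathcal{U}$ with the following property: if $z\in W(x)$ and $w\in W(y)$, then $z\ominus c\in W$ and $c\ominus z\in W$ fail to hold simultaneously with $w\ominus c\in W$ and $c\ominus w\in W$, for every $c\in A$. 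In other words, $W(x)\cap W(y)=\emptyset$ should be replaced by a "triangle inequality" condition on the uniform-type neighbourhoods $W(\cdot)$. The natural way to obtain $W$ is from continuity of $d(u,v)=(u\ominus v)\oplus(v\ominus u)$ together with $d(x,y)\ne 0$: choose $W$ with $W\oplus W$ contained in a neighbourhood of $0$ that misses $d(x,y)$, and use the MV-inequality $d(x,y)\le d(x,c)\oplus d(c,y)$ together with $d(u,v)\le (u\ominus v)\oplus(v\ominus u)$. I expect some care here, since neighbourhoods need not be downward closed; I would try a convex (down-closed) refinement if one is available, and otherwise argue via nets.

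Now put $\varepsilon=\min(\ell_1,\ell_2)$ and suppose $h\in O(W,\varepsilon)(f)$ with $h=i_A(c)$ constant. The set of $r$ where $h(r)\notin W(f(r))$ has measure $<\varepsilon$. Hence there are $r_1\in I_1$ and $r_2\in I_2$ with $c\in W(x)$ and $c\in W(y)$, contradicting the choice of $W$. Thus $O(W,\varepsilon)(f)\cap C=\emptyset$, so $A^\bullet\setminus C$ is open and $C$ is closed. This would complete the proof of the theorem.
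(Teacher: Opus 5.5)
Your proposal follows the paper's route. It uses the constant-function embedding, the identity $O(U,\varepsilon)(i_A(a))\cap i_A(A)=i_A(U(a))$ for $0<\varepsilon\le 1$, and closedness by separating two distinct values $x\neq y$ that $f$ takes on intervals of positive length. You apply the identity at every point, whereas the paper applies it only at $0$ and uses continuity at $0$ of homomorphisms; both work. However, the step you flag as the main obstacle is not actually needed, and as written it is an unproved detour. Your final contradiction, $c\in W(x)$ and $c\in W(y)$, uses only $W(x)\cap W(y)=\emptyset$. Hausdorffness alone, with no appeal to regularity, already gives $U\in\mathcal{U}$ with $U(x)\cap U(y)=\emptyset$, so simply take $W=U$, as the paper does with $V(x_1)\cap V(x_2)=\emptyset$. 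Your stronger ``chain'' condition can in fact be obtained without nets, though it is not needed: continuity of $s\mapsto d(x,y)\wedge s$ at $0$ and $d(x,y)\neq 0$ give a neighbourhood $N$ of $0$ containing no $s\ge d(x,y)$, which handles non-down-closed neighbourhoods; then take $W$ whose eightfold $\oplus$-sum lies in $N$.

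There is also a slip in the measure count. For $h\in O(W,\varepsilon)(f)$, the set $\{r\mid h(r)\notin W(f(r))\}$ is the union of $\{r\mid (h\ominus f)(r)\notin W\}$ and $\{r\mid (f\ominus h)(r)\notin W\}$. Each has measure $<\varepsilon$, so the union has measure $<2\varepsilon$, not $<\varepsilon$. With $\varepsilon=\min(\ell_1,\ell_2)$, that bound alone would not stop the union from covering $I_1$. Your conclusion still holds because $h\equiv c$ is constant. On $I_1$, each of the two sets is either empty or all of $I_1$, and the latter would have measure $\ell_1\ge\varepsilon$. So both sets miss $I_1$, giving $c\in W(x)$, and likewise $c\in W(y)$. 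Alternatively, shrink $\varepsilon$ as the paper does with $\varepsilon/3$; this lets it show directly that every $g$ near $f$ is non-constant.
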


\begin{proof}
Define $i_A:A\rightarrow A^\bullet$ by $i_A(a)=a^\bullet$ for each $a\in A$, where $a^\bullet$ is defined by $a^\bullet(r)=a$ for each $r\in [0,1)$. It is obvious that $i_A$ is injective. Take any $a,b\in A$. Then $i_A(a\oplus b)=(a\oplus b)^\bullet$. That is,
\[(a\oplus b)^\bullet(r)=a\oplus b=a^\bullet(r)\oplus b^\bullet(r)=i_A(a)(r)\oplus i_A(b)(r)=(i_A(a)\oplus i_A(b))(r)\]
for each $r\in [0,1)$.
This means that $i_A(a\oplus b)=(i_A(a)\oplus i_A(b))$. Similarly, one can easily show that $i_A(0)=0^\bullet$ and $i_A(a^*)=i_A(a)^*$. Thus, we have proved that $i_A$ is an isomorphism from $A$ onto $i_A(A)$.

Let $\mathcal{V}$ be an open neighbourhood base at $0$ in $A$. To show that $i_A:A\rightarrow i_A(A)$ is homeomorphism, it is enough to show that $i_A(V)=O(V,\varepsilon)\cap i_{A}(A)$ for each $V\in \mathcal{V}$ and $1>\varepsilon>0$ by the fact that a homomorphism $f:B_1\rightarrow B_2$ of MV-algebras is continuous iff $f$ is continuous at $0$ of $B_1$ \cite[Proposition 3.7]{GLD}. Take each $a\in V$. Then $i_A(a)=a^\bullet$. Since $a^\bullet(r)=a\in V$ for each $r\in [0,1)$, we obtain that $\{r\in [0,1)\mid a^\bullet(r)\notin V\}=\emptyset$. Hence, $\mu\big(\{r\in [0,1)\mid a^\bullet(r)\notin V\}\big)=0<\varepsilon$. This implies that $a^\bullet\in O(V,\varepsilon)$. Thus we have shown that $i_A(V)\subseteq O(V,\varepsilon)\cap i_{A}(A)$. Take any $f\in O(V,\varepsilon)\cap i_{A}(A)$. We can assume that $f=a^\bullet$ for some $a\in A$. We claim that $a\in V$, which implies that $i_A(V)\supseteq O(V,\varepsilon)\cap i_{A}(A)$. Indeed, if not, then $a\notin V$. Thus, $\{r\in [0,1)\mid a^\bullet(r)\notin V\}=[0,1)$, which implies that \[\mu\big(\{r\in [0,1)\mid a^\bullet(r)\notin V\}\big)=1>\varepsilon.\]
This implies that $a^\bullet\notin O(V,\varepsilon)$, a contradiction with $a^\bullet\in O(V,\varepsilon)\cap i_A(A).$

Finally, we shall prove that $i_A(A)$ is closed in $A^\bullet$. Take any $f\in A^\bullet\setminus i_A(A)$. Then $f$ cannot be constant as a function from $[0,1)$ to $A$. Therefore, we can find real numbers $a_1,a_2,a_3,a_4$ satisfying $0<a_1<a_2\leq a_3<a_4<1$ and two distinct elements $x_1,x_2\in A$ such that $f$ is equal to $x_1$ on $[a_1,a_2)$ and $f$ is equal to $x_2$ on $[a_3,a_4)$. Since $A$ is Hausdorff and the sets $V(x)$, with $V\in\mathcal{V}$, form a neighbourhood base at $x$, there is $V\in \mathcal{V}$ such that $V(x_1)\cap V(x_2)=\emptyset$. Put $\varepsilon=\min\{a_2-a_1,a_4-a_3\}$. Shrinking the two intervals if necessary, we may assume that $\varepsilon=a_2-a_1=a_4-a_3$. Then we claim that $O(V,\frac{\varepsilon}{3})(f)\cap i_A(A)=\emptyset$, which implies that $i_A(A)$ is closed in $A^\bullet$.
Indeed, take any $g\in O(V,\frac{\varepsilon}{3})(f)$. Put \[E_1=\{r\in [0,1)\mid (g\ominus f)(r)\in V\}\]
and
\[E_2=\{r\in [0,1)\mid (f\ominus g)(r)\in V\}.\]
Since $g\in O(V,\frac{\varepsilon}{3})(f) $, we obtain that
\[\mu\big(\{r\in [0,1)\mid (g\ominus f)(r)\notin V\}\big)<\frac{\varepsilon}{3}\]
and
\[\mu\big(\{r\in [0,1)\mid (f\ominus g)(r)\notin V\}\big)<\frac{\varepsilon}{3}\]
which implies that $\mu(E_1)\geq 1-\frac{\varepsilon}{3}$ and $\mu(E_2)\geq 1-\frac{\varepsilon}{3}.$ We have the following Claim 1:

{\bf Claim 1:} $E_1\cap [a_1,a_2)\cap E_2\neq\emptyset$.
If not, then $E_1\cap [a_1,a_2)\cap E_2=\emptyset.$ That is, $(E_1\cap [a_1,a_2))\cap([a_1,a_2)\cap E_2)=\emptyset.$ Hence,
\[\mu\big(E_1\cap [a_1,a_2)\big)+\mu\big(E_2\cap [a_1,a_2)\big)=\mu\big((E_1\cap [a_1,a_2))\cup (E_2\cap [a_1,a_2))\big)\leq a_2-a_1=\varepsilon.\]
On the other hand, since $\mu\big(\{r\in [0,1)\mid (g\ominus f)(r)\notin V\}\big)<\frac{\varepsilon}{3}$, we obtain that
\[\mu(E_1\cap [a_1,a_2))=\mu(\{r\in [0,1)\mid (g\ominus f)(r)\in V\}\cap [a_1,a_2))>\frac{2\varepsilon}{3}.\] Similarly, we can obtain that
\[\mu(E_2\cap [a_1,a_2))=\mu(\{r\in [0,1)\mid (f\ominus g)(r)\in V\}\cap [a_1,a_2))>\frac{2\varepsilon}{3}\]
 by $\mu\big(\{r\in [0,1)\mid (f\ominus g)(r)\notin V\}\big)<\frac{\varepsilon}{3}.$ Hence,
\[\mu\big(E_1\cap [a_1,a_2)\big)+\mu\big(E_2\cap [a_1,a_2)\big)>\frac{2\varepsilon}{3}+\frac{2\varepsilon}{3}=\frac{4\varepsilon}{3}>\varepsilon.\]
This is a contradiction with $\mu\big(E_1\cap [a_1,a_2)\big)+\mu\big(E_2\cap [a_1,a_2)\big)\leq \varepsilon.$

Also, we have the following Claim 2:

{\bf Claim 2:} $E_1\cap [a_3,a_4)\cap E_2\neq\emptyset$.

Indeed, put $\delta=a_4-a_3$. If $E_1\cap [a_3,a_4)\cap E_2=\emptyset$, then $(E_1\cap [a_3,a_4))\cap ([a_3,a_4)\cap E_2)=\emptyset$. Hence,
\[\mu\big(E_1\cap [a_3,a_4)\big)+\mu\big(E_2\cap [a_3,a_4)\big)=\mu\big((E_1\cap [a_3,a_4))\cup (E_2\cap [a_3,a_4))\big)\leq a_4-a_3=\delta.\]
On the other hand, since $\mu\big(\{r\in [0,1)\mid (g\ominus f)(r)\notin V\}\big)<\frac{\varepsilon}{3}\leq \frac{\delta}{3}$, we obtain that
\[\mu(E_1\cap [a_3,a_4))=\mu(\{r\in [0,1)\mid (g\ominus f)(r)\in V\}\cap [a_3,a_4))>\frac{2\delta}{3}.\] Similarly, we can obtain that
\[\mu(E_2\cap [a_3,a_4))=\mu(\{r\in [0,1)\mid (f\ominus g)(r)\in V\}\cap [a_3,a_4))>\frac{2\delta}{3}\] by $\mu\big(\{r\in [0,1)\mid (f\ominus g)(r)\notin V\}\big)<\frac{\varepsilon}{3}\leq\frac{\delta}{3}.$ Hence,
\[\mu\big(E_1\cap [a_3,a_4)\big)+\mu\big(E_2\cap [a_3,a_4)\big)>\frac{2\delta}{3}+\frac{2\delta}{3}=\frac{4\delta}{3}>\delta.\]
This is a contradiction with $\mu\big(E_1\cap [a_3,a_4)\big)+\mu\big(E_2\cap [a_3,a_4)\big)\leq \delta.$

According to Claim 1, we can take a point $r_1\in E_1\cap [a_1,a_2)\cap E_2$. Noting that $f$ is equal to $x_1$ on $[a_1,a_2)$, we obtain that
 \[g(r_1)\ominus x_1=g(r_1)\ominus f(r_1)=(g\ominus f)(r_1)\in V\]
 and
 \[ x_1 \ominus g(r_1)=f(r_1)\ominus g(r_1)=(f\ominus g)(r_1)\in V\]
 by the definitions $E_1$ and $E_2$ above.
 This implies that $g(r_1)\in V(x_1)$.

 Similarly, we can take a point $r_2\in E_1\cap [a_3,a_4)\cap E_2$ by Claim 2. Then as the same way one can obtain that $g(r_2)\in V(x_2)$,
 because $f$ is equal to $x_2$ on $[a_3,a_4)$. Therefore, $g(r_1)\neq g(r_2)$ by $V(x_1)\cap V(x_2)=\emptyset$. Finally we have proved that $g$ cannot be constant as a function on $[0,1)$,
 so we have proved that $O(V,\frac{\varepsilon}{3})(f)\cap i_A(A)=\emptyset$. This implies that $i_A(A)$ is closed in $A^\bullet$.

 $A^\bullet$ is a pathwise connected, locally pathwise connected topological MV-algebra by Proposition \ref{Pro4}. Therefore, we complete the proof.
\end{proof}

In what follows we identify a topological MV-algebra $A$ with its image $i_A(A) \subseteq A^\bullet$ defined
in Theorem \ref{the1}. Let us show that the MV-algebra $A$ is placed in $A^\bullet$ in a very special way,
permitting an extension of continuous bounded pseudometrics from $A$ over $A^\bullet$.

\begin{definition}\label{Def3.5}
Let $X$ be a set. A function $d:X\times X\rightarrow [0,+\infty)$ is called a pseudometric on $X$ if $d$ satisfies the following conditions:
\begin{enumerate}
\item[(1)] $d(x,y)=0$ for $x=y$;
\item[(2)] $d(x,y)=d(y,x)$ for each $x,y\in X$;
\item[(3)] $d(x,y)\leq d(x,z)+d(z,y)$ for each $x,y,z\in X$.
\end{enumerate}
\end{definition}

\begin{theorem}\label{the2}
Let $d$ be a continuous bounded pseudometric on a topological $MV$-algebra $A$. Then $d$ admits an extension to a
continuous bounded pseudometric $d^\bullet$ over the $MV$-algebra $A^\bullet$. In addition, if $d$ is
a metric on $A$ generating the topology of $A$, then $d^\bullet$ is also a metric on $A^\bullet$ generating the
topology of $A^\bullet$.
\end{theorem}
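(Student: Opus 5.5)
The natural candidate is the classical Hartman--Mycielski extension:
\[
d^\bullet(f,g)=\int_0^1 d\bigl(f(r),g(r)\bigr)\,dr ,\qquad f,g\in A^\bullet .
\]
Since $f,g$ are step functions, we can take a common partition $0=a_0<\dots<a_n=1$ with $f=x_k$, $g=y_k$ on $J_k=[a_{k-1},a_k)$. The integral is then the finite sum $\sum_k (a_k-a_{k-1})\,d(x_k,y_k)$, so no measure theory is really needed. The pseudometric axioms follow pointwise: (1) and (2) are immediate, and the triangle inequality comes from integrating $d(f(r),g(r))\le d(f(r),h(r))+d(h(r),g(r))$ over a common refinement. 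Boundedness is clear, since $d^\bullet\le \sup d$. For $a,b\in A$ we get $d^\bullet(a^\bullet,b^\bullet)=d(a,b)$, so under the identification of Theorem \ref{the1} the function $d^\bullet$ extends $d$.

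For continuity of $d^\bullet$ on $A^\bullet\times A^\bullet$, the triangle inequality reduces the task to showing that $f\mapsto d^\bullet(f,f_0)$ is continuous at $f_0$ for each fixed $f_0$. Let $C=\sup d$ and fix $\eta>0$. Take the partition for $f_0$ with values $x_k$. By continuity of $d$, choose for each $k$ a basic neighbourhood $W_k\in\mathcal U$ with $d(x,x_k)<\eta/2$ whenever $x\in W_k(x_k)$, and set $W=\bigcap_k W_k$. If $h\in O(W,\delta)(f_0)$, then off a set of measure $<2\delta$ we have $h(r)\in W(f_0(r))$, so $d(h(r),f_0(r))<\eta/2$ there, exactly as in the proof of Proposition \ref{Pro3}(ii). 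Hence $d^\bullet(h,f_0)\le \eta/2+2\delta C<\eta$ once $\delta<\eta/(4C)$.

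For the second part, assume $d$ is a metric generating the topology of $A$. Then $d^\bullet(f,g)=0$ forces $d(x_k,y_k)=0$ on every interval of positive length, so $f=g$ and $d^\bullet$ is a metric. The continuity just proved shows that the $d^\bullet$-topology is coarser than that of $A^\bullet$. For the reverse inclusion, given a basic neighbourhood $O(U,\varepsilon)(f)$, I would use the step-function structure of $f$ again. With values $x_k$, pick $\rho>0$ such that $d(x,x_k)<\rho$ implies $x\ominus x_k\in U$ and $x_k\ominus x\in U$; this is possible by continuity of $\ominus$ and because $d$ generates the topology. If $d^\bullet(g,f)<\rho\varepsilon$, Chebyshev's inequality gives $\mu\{r: d(g(r),f(r))\ge\rho\}<\varepsilon$, and therefore $g\in O(U,\varepsilon)(f)$.

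I expect this last step to be the main obstacle, since it must convert smallness of an average into control of the measure of the set where $g(r)$ leaves $U(f(r))$. The delicate point is that the choice of $\rho$ depends on the finitely many values of $f$, while $U(x)$ need not be uniform in $x$. Working pointwise at the finitely many $x_k$ avoids any uniformity issue, so the Chebyshev argument above should suffice.
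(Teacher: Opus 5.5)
Your proposal is correct and follows essentially the same route as the paper: the same finite-sum (integral) definition of $d^\bullet$, the same use of $|d^\bullet(f_1,g_1)-d^\bullet(f,g)|\le d^\bullet(f_1,f)+d^\bullet(g_1,g)$ together with a neighbourhood $V$ adapted to the finitely many values of $f$ for continuity, and the same Chebyshev-type estimate with radius $\varepsilon\delta$ for the metric part. Your bookkeeping is slightly more careful than the paper's in a few places. You note that $h\in O(W,\delta)(f_0)$ only gives $h(r)\in W(f_0(r))$ off a set of measure $<2\delta$ (the paper asserts $<\varepsilon/4$ where only $<\varepsilon/2$ follows). You do not assume the perturbed functions are constant on the original partition. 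You also explicitly check that $d^\bullet$ extends $d$, and that continuity of $d^\bullet$ makes the $d^\bullet$-topology coarser than the topology of $A^\bullet$.
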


\begin{proof}

Without loss of generality we can assume that $d$ is bounded by $1$. Take any $f,g\in A^\bullet$. Then there is a partition \(0=a_0<a_1<\dots<a_n=1\) of $J=[0,1)$
such that for each non-negative $k<n$ both $f$ and
$g$ are constant on each interval \(J_k=[a_{k},a_{k+1})\) and are equal to \(x_k,y_k\in A\) on this interval, respectively. We define $d^\bullet(f,g)$ as follows:
\[d^\bullet(f,g)=\sum_{k=0}^{n-1}(a_{k+1}-a_k)d(x_k,y_k).\]

Since $f$ and $g$ are constant on each interval \(J_k=[a_{k},a_{k+1})\), one can easily verify that $d^\bullet(f,g)$ does not depend on the choice of
the partition \(0=a_0<a_1<\dots<a_n=1\) of $J=[0,1)$. Now we prove that $d^\bullet$ satisfies Definition \ref{Def3.5}. If $f=g$, then $x_k=y_k$ for each
non-negative $k< n$. Hence, $d(x_k,y_k)=0$ by $d$ being a pseudometric. This implies that $d^\bullet(f,g)=0$. Since $d$ is symmetric, it
is obvious that $d^\bullet(f,g)=d^\bullet(g,f)$. Take any $h\in A^\bullet$. We can assume that $h$ is constant on each interval \(J_k=[a_{k},a_{k+1})\)
 and is equal to \(z_k\in A\) on this interval for each non-negative $k< n$. For each non-negative $k< n$, $d(x_k,y_k)\leq d(x_k,z_k)+d(z_k,y_k)$ by $d$
 satisfying (3) in Definition \ref{Def3.5}, so

 \begin{align*}
d^\bullet(f,g)&=\sum_{k=0}^{n-1}(a_{k+1}-a_k)d(x_k,y_k)
\\&\leq \sum_{k=0}^{n-1}(a_{k+1}-a_k)(d(x_k,z_k)+d(z_k,y_k))
\\&=\sum_{k=0}^{n-1}(a_{k+1}-a_k)d(x_k,z_k)+\sum_{k=0}^{n-1}(a_{k+1}-a_k)d(z_k,y_k)
\\&=d^\bullet(f,h)+d^\bullet(h,g).
\end{align*}
This proves that $d^\bullet$ is a pseudometric on $A^\bullet$.  $d^\bullet$ is bounded by $d$ being bounded.

Let us show that $d^\bullet$ is continuous on $A^\bullet\times A^\bullet$. Let $\varepsilon>0$ and take any $(f,g)\in A^\bullet\times A^\bullet$. We can
assume that for each non-negative $k< n$ both $f$ and
$g$ are constant on each interval \(J_k=[a_{k},a_{k+1})\) and are equal to \(x_k,y_k\in A\) on this interval, respectively.
Since $d$ is continuous, there is an open neighbourhood $V$ of $0$ in $A$ such that $d(x_k,x)< \frac{\varepsilon}{4}$ for each $x\in V(x_k)$
and $d(y_k,y)< \frac{\varepsilon}{4}$ for each $y\in V(y_k)$ for each non-negative $k<n$. Now we claim that
 \[\mid d^\bullet(f_1,g_1)-d^\bullet(f,g)\mid<\varepsilon\]
whenever $f_1\in O(V,\frac{\varepsilon}{4})(f)$ and $g_1\in O(V,\frac{\varepsilon}{4})(g)$. This implies that $d^\bullet$ is continuous.

Indeed, we can assume without loss of generality that for each non-negative $k< n$ both $f_1$ and
$g_1$ are constant on each interval \(J_k=[a_{k},a_{k+1})\) and are equal to \(x_k^1,y_k^1\in A\) on this interval, respectively.
Denote by $L_1$ and $L_2$ the sets of all integers $k\leq n-1$ such that $x_k^1\in V(x_k)$ and $y_k^1\in V(y_k)$, respectively. Put
$M_1=\{0,1,\ldots,n-1\}\setminus L_1$ and $M_2=\{0,1,\ldots,n-1\}\setminus L_2$. It follows from the choice of $d$ and $f_1$ that $d(x_k^1,x_k)<\frac{\varepsilon}{4}$
for each $k\in L_1$ and $\sum_{k\in M_1}(a_{k+1}-a_k)<\frac{\varepsilon}{4}$. Similarly, we have that $d(y_k^1,y_k)<\frac{\varepsilon}{4}$
for each $k\in L_2$ and $\sum_{k\in M_2}(a_{k+1}-a_k)<\frac{\varepsilon}{4}$. Since $d$ is bounded by $1$, $\sum_{k\in L_1}(a_{k+1}-a_k)\leq 1$ and
$\sum_{k\in L_2}(a_{k+1}-a_k)\leq 1$. Then

\begin{align*}
\mid d^\bullet(f_1,g_1)-d^\bullet(f,g)\mid&=\mid d^\bullet(f_1,g_1)-d^\bullet(f,g_1)+d^\bullet(f,g_1)-d^\bullet(f,g)\mid
\\&\leq \mid d^\bullet(f_1,g_1)-d^\bullet(f,g_1)\mid+\mid d^\bullet(f,g_1)-d^\bullet(f,g)\mid
\\&\leq d^\bullet(f_1,f)+d^\bullet(g_1,g)
\\&=\sum_{k=0}^{n-1}(a_{k+1}-a_k)d(x_k^1,x_k)+\sum_{k=0}^{n-1}(a_{k+1}-a_k)d(y_k^1,y_k)
\\&=\sum_{k\in L_1}(a_{k+1}-a_k)d(x_k^1,x_k)+\sum_{k\in M_1}(a_{k+1}-a_k)d(x_k^1,x_k)
\\&+\sum_{k\in L_2}(a_{k+1}-a_k)d(y_k^1,y_k)+\sum_{k\in M_2}(a_{k+1}-a_k)d(y_k^1,y_k)
\\&\leq \sum_{k\in L_1}(a_{k+1}-a_k)d(x_k^1,x_k)+\sum_{k\in M_1}(a_{k+1}-a_k)
\\&+\sum_{k\in L_2}(a_{k+1}-a_k)d(y_k^1,y_k)+\sum_{k\in M_2}(a_{k+1}-a_k)
\\&<\max\{d(x_k^1,x_k)\mid k\in L_1\}+\frac{\varepsilon}{4}
\\&+\max\{d(y_k^1,y_k)\mid k\in L_2\}+\frac{\varepsilon}{4}
\\&<\frac{\varepsilon}{4}+\frac{\varepsilon}{4}+\frac{\varepsilon}{4}+\frac{\varepsilon}{4}=\varepsilon.
\end{align*}
This proves the continuity of $d^\bullet$ on $A^\bullet$.

Finally, suppose that $d$ is a metric on $A$ generating the topology of $A$. According to the definition of $d^\bullet$, one can
easily obtain that $d^\bullet(f,g)>0$ if $f,g\in A^\bullet$ are distinct, so that $d^\bullet$ is a metric on $A^\bullet$. Now we shall prove
 that $d^\bullet$ generates the topology of $A^\bullet$.

 Take any $f\in  A^\bullet$ and an open neighbourhood $O(V,\varepsilon)(f)$ of $f$ in $A^\bullet$. Let $0=a_0<a_1<\ldots<a_n=1$ be a
 partition of $J=[0,1)$ such that $f$ takes a constant value $x_k$ on each $[a_k,a_{k+1})$. Since $V(x_k)$ is an open neighbourhood of $x_k$ in $A$ for each
 non-negative integer $k<n$, one can find $\delta>0$ such that $\{y\in A\mid d(x_k,y)<\delta\}\subseteq V(x_k)$ for each $k=0,1,\ldots,n-1$. Put $\delta_0=\varepsilon\delta.$
 We claim that
 \[\{g\in A^\bullet\mid d^\bullet(f,g)< \delta_0\}\subseteq O(V,\varepsilon)(f),\]
 which implies that $d^\bullet$ generates the topology of $A^\bullet$.

 Indeed, suppose that an element $g\in A^\bullet$ satisfies $d^\bullet(f,g)< \delta_0$. We can assume without loss of
 generality that $g$ takes a constant value $y_k$ on $[a_k,a_{k+1})$ for each non-negative $k< n$. Denote by $P$ the set of all non-negative
 integers $k<n$ such that $y_k\notin V(x_k)$. Clearly, $d(y_k,x_k)\geq\delta$ holds for each $k\in P$. Thus,

 \begin{align*}
 \sum_{k\in P}(a_{k+1}-a_k)\delta &\leq \sum_{k\in P}(a_{k+1}-a_k)d(y_k,x_k)
 \\&\leq \sum_{i<n}(a_{i+1}-a_i)d(y_i,x_i)
 \\&=d^\bullet(f,g)< \delta_0.
 \end{align*}
It follows that
\[\sum_{k\in P}(a_{k+1}-a_k)<\frac{\delta_0}{\delta}=\varepsilon.\]
 For each $r\in [0,1)\setminus \bigcup_{k\in P}[a_k,a_{k+1})$, there is $k\in \{0,1,\ldots,n-1\}\setminus P$
such that $r\in [a_k,a_{k+1})$. According to the definition of $P$, we obtain that $g(r)=y_k\in V(x_k)=V(f(r))$, that is, $g(r)\ominus f(r)\in V$
and $f(r)\ominus g(r)\in V$. Hence, we obtain that
\[\{r\in J\mid (g\ominus f)(r)=g(r)\ominus f(r)\notin V\}\subseteq \bigcup_{k\in P}[a_k,a_{k+1})\]
and
\[\{r\in J\mid (f\ominus g)(r)=f(r)\ominus g(r)\notin V\}\subseteq \bigcup_{k\in P}[a_k,a_{k+1})\].
This implies that
\[\mu\big(\{r\in J\mid (g\ominus f)(r))\notin V\}\big)\leq \sum_{k\in P}(a_{k+1}-a_k)<\varepsilon\]
and
\[\mu\big(\{r\in J\mid (f\ominus g)(r))\notin V\}\big)\leq \sum_{k\in P}(a_{k+1}-a_k)<\varepsilon\].
Hence, $g\ominus f\in O(V,\varepsilon)$ and $f\ominus g\in O(V,\varepsilon)$, which is equivalent to $g\in O(V,\varepsilon)(f).$
Thus, we have proved that
\[\{g\in A^\bullet\mid d^\bullet(f,g)< \delta_0\}\subseteq O(V,\varepsilon)(f).\]
\end{proof}

The following result gives an extension of bounded continuous real-valued functions.

\begin{corollary}\label{cor1}
Let $h$ be a continuous real-valued bounded function on a topological MV-algebra $A$. Then $h$ admits an extension to a bounded continuous function
on the MV-algebra $A^\bullet$.
\end{corollary}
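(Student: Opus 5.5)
The plan is to reduce the corollary to Theorem \ref{the2} by encoding $h$ in a continuous bounded pseudometric on $A$. Since $h$ is bounded, fix $M>0$ with $|h(x)|\leq M$ for all $x\in A$. Define
\[
d(x,y)=|h(x)-h(y)|,\qquad x,y\in A.
\]
This is clearly a pseudometric in the sense of Definition \ref{Def3.5}. It is bounded by $2M$, and it is continuous on $A\times A$ because $h$ is continuous. Theorem \ref{the2} then yields a continuous bounded pseudometric $d^\bullet$ on $A^\bullet$ extending $d$, where $A$ is identified with $i_A(A)$.

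Define the extension by
\[
h^\bullet(f)=h(0)+d^\bullet(f,0^\bullet)\quad\text{or rather, more carefully,}\quad h^\bullet(f)=\int_0^1 h(f(r))\,d\mu(r)=\sum_{k=0}^{n-1}(a_{k+1}-a_k)h(x_k),
\]
where $f$ takes the value $x_k$ on $[a_k,a_{k+1})$. The second formula is preferable: the first is not an extension when $h$ is not of the form $h(0)+|h(\cdot)-h(0)|$. The integral formula is modelled directly on the definition of $d^\bullet$ and does not depend on the partition, since refining a partition leaves the sum unchanged. On constants it gives $h^\bullet(a^\bullet)=h(a)$, so $h^\bullet\circ i_A=h$. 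It is bounded by $M$, since the sum is a convex combination of values of $h$.

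The main work, though routine, is continuity. There are two routes.

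\emph{Via Theorem \ref{the2}.} Since $h^\bullet(f)-h^\bullet(g)=\sum_k (a_{k+1}-a_k)\bigl(h(x_k)-h(y_k)\bigr)$ over a common partition, we get
\[
|h^\bullet(f)-h^\bullet(g)|\leq \sum_k (a_{k+1}-a_k)\,d(x_k,y_k)=d^\bullet(f,g).
\]
Hence $h^\bullet$ is $1$-Lipschitz with respect to $d^\bullet$. For fixed $f$, the map $g\mapsto d^\bullet(f,g)$ is continuous on $A^\bullet$ by Theorem \ref{the2}, so $g\to f$ forces $d^\bullet(f,g)\to d^\bullet(f,f)=0$, and $h^\bullet$ is continuous at $f$.

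\emph{Directly.} Alternatively, one can argue as in the continuity part of the proof of Theorem \ref{the2}. Choose $V$ with $|h(x)-h(x_k)|<\varepsilon/2$ for all $x\in V(x_k)$ and all $k$. If $g\in O(V,\varepsilon/(4M))(f)$, the set where $g(r)\notin V(f(r))$ has measure $<\varepsilon/(4M)$ and contributes at most $2M\cdot\varepsilon/(4M)=\varepsilon/2$. The remaining set contributes less than $\varepsilon/2$.

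I expect no real obstacle. The only point needing care is using a common refinement of the partitions for $f$ and $g$, which the paper already does freely.
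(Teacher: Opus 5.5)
Your proposal is correct. The definition $h^\bullet(f)=\sum_k(a_{k+1}-a_k)h(x_k)$, its independence of the partition, the bound by $M$, and the identity $h^\bullet(a^\bullet)=h(a)$ are exactly as in the paper. The difference lies in how you get continuity.

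The paper argues directly: it chooses $V$ with $|h(y)-h(x_k)|<\varepsilon/2$ on $V(x_k)$ and estimates $|h^\bullet(g)-h^\bullet(f)|$ for $g\in O(V,\eta)(f)$. This is essentially your ``direct'' alternative.

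Your main route instead encodes $h$ in the continuous bounded pseudometric $d(x,y)=|h(x)-h(y)|$. Over a common partition you get the $1$-Lipschitz estimate $|h^\bullet(f)-h^\bullet(g)|\le d^\bullet(f,g)$, and continuity then follows from the joint continuity of $d^\bullet$ in Theorem \ref{the2}. This is shorter and makes the statement a genuine corollary of Theorem \ref{the2}. It also keeps all the measure bookkeeping inside one proof.

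The price is that you need the explicit formula $d^\bullet(f,g)=\sum_k(a_{k+1}-a_k)d(x_k,y_k)$ from the \emph{proof} of Theorem \ref{the2}. Its statement only asserts that some continuous extension exists. The paper's argument, by contrast, is self-contained.

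Two small corrections:
\begin{itemize}
\item Drop the false start $h(0)+d^\bullet(f,0^\bullet)$.
\item In your direct route, the set where $g(r)\notin V(f(r))$ is the union of $\{r\mid (g\ominus f)(r)\notin V\}$ and $\{r\mid (f\ominus g)(r)\notin V\}$. Each has measure $<\eta$, so the union only has measure $<2\eta$. Taking $\eta=\varepsilon/(8M)$ fixes the constant. The paper's own estimate has the same factor-of-two slip.
\end{itemize}
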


\begin{proof}
We may assume that $\lvert h\rvert \leq M$ for some $M>0$. For $f\in A^\bullet$, choose a partition $0=a_0<a_1<\cdots<a_n=1$ such that $f$ is constant on each interval $[a_k,a_{k+1})$, say $f(r)=x_k$ on $[a_k,a_{k+1})$. Define
\[
h^\bullet(f)=\sum_{k=0}^{n-1}(a_{k+1}-a_k)h(x_k).
\]
This definition is independent of the chosen partition, since passing to a common refinement does not change the above sum. If $a\in A$, then $h^\bullet(a^\bullet)=h(a)$, so $h^\bullet$ extends $h$.

It remains to prove continuity. Fix $f\in A^\bullet$ and $\varepsilon>0$. Choose a partition $0=a_0<a_1<\cdots<a_n=1$ such that $f(r)=x_k$ on $[a_k,a_{k+1})$. Since $h$ is continuous, for each $k$ there is a neighbourhood $V_k$ of $0$ such that $\lvert h(y)-h(x_k)\rvert<\varepsilon/2$ whenever $y\in V_k(x_k)$. Choose a neighbourhood $V$ of $0$ with $V\subseteq\bigcap_{k=0}^{n-1}V_k$ and put $\eta=\varepsilon/(4M+1)$. If $g\in O(V,\eta)(f)$ and $g(r)=y_k$ on the same refined partition, then the set of $r$ for which $g(r)\notin V(f(r))$ has measure less than $\eta$. Hence
\[
\lvert h^\bullet(g)-h^\bullet(f)\rvert
\leq \frac{\varepsilon}{2}+2M\eta<\varepsilon.
\]
Thus $h^\bullet$ is continuous and bounded.
\end{proof}

\begin{lemma}\cite[Theorem 3.6]{GLD} \label{Lemma2}
Let $A$ be a topological MV-algebra. If $\mathcal{V}$ is an open neighbourhood base at $a$, then
 $\{U^{(a)}\mid U\in \mathcal{V}\}$ is an open neighbourhood base at $0$.
\end{lemma}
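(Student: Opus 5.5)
The plan is to realise $U^{(a)}$ as the preimage of $U\times U$ under a continuous map, and then to recover $x$ continuously from the pair $(a\vee x,\ a\ominus x)$. Throughout I use that the derived operations $\odot$, $\ominus$, $\vee$, $\wedge$ are continuous. This holds because each is a composite of $\oplus$ and $\ast$ (by (MV6), (MV7) and the formula for the join), and these are continuous in a topological MV-algebra.

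\emph{Step 1: each $U^{(a)}$ is an open neighbourhood of $0$.} Define $\Phi_a:A\to A\times A$ by $\Phi_a(x)=(a\vee x,\ a\ominus x)$. It is continuous, and $U^{(a)}=\Phi_a^{-1}(U\times U)$, so $U^{(a)}$ is open whenever $U$ is open. Moreover $a\vee 0=a$ and $a\ominus 0=a\odot 0^\ast=a\odot 1=a$. Since $U$ is a neighbourhood of $a$, this gives $\Phi_a(0)=(a,a)\in U\times U$, that is, $0\in U^{(a)}$.

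\emph{Step 2: the key identity.} I would prove that for all $a,x\in A$,
\[
(a\vee x)\ominus(a\ominus x)=x .
\]
The quickest route is Chang's completeness theorem: an MV-equation holds in every MV-algebra iff it holds in $[0,1]$. In $[0,1]$ we have $u\ominus v=\max(u-v,0)$ and $u\vee v=\max(u,v)$. If $x\ge a$, the left side is $x\ominus 0=x$. If $x<a$, it is $a-(a-x)=x$. If one prefers a purely algebraic derivation, one can work from $a\vee x=x\oplus(a\ominus x)$ (the join formula) together with Lemma \ref{Lema1}(1). This identity is the only genuinely nontrivial step, and I expect it to be the main obstacle if one insists on avoiding the representation theorem.

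\emph{Step 3: the family is a base at $0$.} Let $W$ be any open neighbourhood of $0$. Consider the continuous map $G:A\times A\to A$, $G(u,v)=u\ominus v$. It satisfies $G(a,a)=a\odot a^\ast=0$ by Lemma \ref{Lema1}(3). By continuity at $(a,a)$ and because $\mathcal{V}$ is a neighbourhood base at $a$, there is $U\in\mathcal{V}$ with $G(U\times U)\subseteq W$. Now take $x\in U^{(a)}$. Then $a\vee x\in U$ and $a\ominus x\in U$, so Step 2 gives $x=G(a\vee x,a\ominus x)\in W$. Hence $U^{(a)}\subseteq W$. Together with Step 1, this shows that $\{U^{(a)}\mid U\in\mathcal{V}\}$ is an open neighbourhood base at $0$.
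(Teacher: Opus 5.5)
Your proof is correct and complete. The paper gives no argument for this lemma; it is quoted as Theorem 3.6 of Gan--Luan--Deng--Yang. The natural comparison is therefore with the computation in the proof of Proposition \ref{Pro3.9}, $(3)\Rightarrow(2)$. There the paper recovers an element $z$ from the pair $(z\vee a,\ a\ominus z)$ through Lemma \ref{Lema1}(1):
\[
z=(a\wedge z)\oplus(z\ominus a)=(a\ominus(a\ominus z))\oplus((z\vee a)\ominus a).
\]
Fed into your Step 3, this identity would also prove the lemma, using continuity at $(a,a)$ of $(u,v)\mapsto(a\ominus v)\oplus(u\ominus a)$, which likewise vanishes there. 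Your identity $x=(a\vee x)\ominus(a\ominus x)$ does the same job with the single operation $\ominus$, so your Step 3 is shorter.

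Step 2 is not the obstacle you expect, and Chang's completeness theorem is unnecessary (legitimate, but heavy). Put $y=a\ominus x$. The join formula gives $a\vee x=x\oplus y$, and $y^\ast=a^\ast\oplus x$. The meet formula $u\wedge v=u\odot(u^\ast\oplus v)$ with $u=y^\ast$ then yields
\[
(a\vee x)\ominus(a\ominus x)=(x\oplus y)\odot y^\ast=y^\ast\odot(y\oplus x)=y^\ast\wedge x=(a^\ast\oplus x)\wedge x=x,
\]
because $x\le a^\ast\oplus x$. So the tool you need here is the meet formula, not Lemma \ref{Lema1}(1) as you suggest.
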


Let $f:X\rightarrow Y$ be a function of topological spaces. We say that $f$ is open at the point $x\in X$ if for each neighbourhood
$U_x$ of $x$, $f(U_x)$ is a neighbourhood of $f(x)$. It is obvious that $f$ is an open function if and only if $f$ is open at each point of $X$.
\begin{proposition}\label{Pro3.9}
Let $A$ and $B$ be topological MV-algebras and $f:A\rightarrow B$ a homomorphism of MV-algebras.  Denote the zero elements of the MV-algebras $A$ and $B$, respectively, by $0_A$ and $0_B$. Then the following statements are equivalent:
\begin{enumerate}
\item[(1)] $f$ is an open function;
\item[(2)] $f$ is open at $0_A$;
\item[(3)] $f$ is open at $a$, for some $a\in A$.
\end{enumerate}
\end{proposition}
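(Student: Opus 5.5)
The cycle $(1)\Rightarrow(3)\Rightarrow(2)\Rightarrow(1)$ is natural here. The implication $(1)\Rightarrow(3)$ is trivial, since an open function is open at every point. The real work lies in transporting openness from one point to another. In a topological group this would be done by translations; in an MV-algebra translations are not homeomorphisms. Instead we will use the neighbourhood bases $U(x)$ from Proposition \ref{Pro1} and the bases $U^{(a)}$ from Lemma \ref{Lemma2}, together with the fact that $f$ preserves $\ominus$, $\vee$ and $\wedge$, because these are term operations.

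For $(2)\Rightarrow(1)$, fix $x\in A$ and a neighbourhood of $x$. By Proposition \ref{Pro1} it contains some $U(x)$ with $U$ an open neighbourhood of $0_A$. Openness at $0_A$ gives an open neighbourhood $W$ of $0_B$ with $W\subseteq f(U)$. We claim that $W(f(x))\cap f(A)\subseteq f(U(x))$. If $f$ is onto this suffices, and I would assume surjectivity is implicit, or else restrict to $f(A)$.

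To prove the claim, take $y=f(z)\in W(f(x))$. Then $f(x)\ominus f(z)=f(x\ominus z)$ and $f(z\ominus x)$ lie in $W\subseteq f(U)$. The difficulty is that we need to find a preimage $z'$ with $x\ominus z'\in U$ and $z'\ominus x\in U$ simultaneously, not merely some elements of $U$ mapping correctly. My plan is to correct $z$ algebraically. Choose $u,v\in U$ with $f(u)=f(x\ominus z)$ and $f(v)=f(z\ominus x)$. Then set $z'=((z\vee x)\ominus (v\wedge(z\ominus x)))$-type expressions, or more simply $z'=(x\ominus u)\oplus v$ after shrinking $U$ using continuity so that $x\ominus((x\ominus u)\oplus v)$ and $((x\ominus u)\oplus v)\ominus x$ land in a prescribed neighbourhood whenever $u,v$ are in a small neighbourhood of $0$. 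This continuity step is legitimate because the map $(u,v)\mapsto d(x,(x\ominus u)\oplus v)$ is continuous and sends $(0,0)$ to $0$. One then checks $f(z')=f(z)$ using MV identities in $B$, with $f(z)$ written as $(f(x)\ominus f(x\ominus z))\oplus f(z\ominus x)$, which is a known identity: $y=(x\ominus(x\ominus y))\oplus(y\ominus x)$ holds in any MV-algebra, via $x\wedge y=x\ominus(x\ominus y)$ and Lemma \ref{Lema1}(1). This verification is the main obstacle, and I would resolve it by working with $u\wedge x$ and $v\wedge x^\ast$ to stay inside the right intervals.

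For $(3)\Rightarrow(2)$ we run the same argument in reverse, now using Lemma \ref{Lemma2}: the sets $U^{(a)}$ for $U$ ranging over neighbourhoods of $a$ form a base at $0_A$. Given such a set, openness at $a$ produces a neighbourhood $W$ of $f(a)$ contained in $f(U)$. We then show that $W^{(f(a))}\subseteq f(U^{(a)})$ up to shrinking. Given $y=f(z)$ with $f(a)\vee y\in W$ and $f(a)\ominus y\in W$, the same kind of algebraic correction, based on the identity $z=(a\vee z)\ominus (a\ominus z)$-style decompositions valid in MV-algebras, produces a preimage $z'$ with $a\vee z'\in U$ and $a\ominus z'\in U$. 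Continuity of the term maps at the appropriate points again guarantees that the corrected element stays in $U^{(a)}$.
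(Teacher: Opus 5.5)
Your argument is correct once you remove the surjectivity hedge in $(2)\Rightarrow(1)$. The proposition covers arbitrary homomorphisms, so assuming $f$ onto is not allowed. ``Restricting to $f(A)$'' would also require proving that $f(A)$ is open in $B$. Neither is needed, because your construction never uses $y\in f(A)$.

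Take an arbitrary $y\in W(f(x))$. Choose in advance a neighbourhood $U'$ of $0_A$ such that $(x\ominus u)\oplus v\in U(x)$ for all $u,v\in U'$; this is possible by continuity of $(u,v)\mapsto(x\ominus u)\oplus v$ at $(0,0)$. Then take $W\subseteq f(U')$. The elements $f(x)\ominus y$ and $y\ominus f(x)$ lie in $W$, so lift them to $u,v\in U'$ and put $z'=(x\ominus u)\oplus v$. Then $f(z')=(f(x)\ominus(f(x)\ominus y))\oplus(y\ominus f(x))=(f(x)\wedge y)\oplus(y\ominus f(x))=y$ by Lemma \ref{Lema1}(1). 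So the verification you call the main obstacle is immediate, and no truncation by $x$ or $x^\ast$ is needed.

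In $(3)\Rightarrow(2)$, make the preimage explicit:
\begin{itemize}
\item Given a neighbourhood $G$ of $0_A$, pick a neighbourhood $U$ of $a$ with $p\ominus q\in G$ for all $p,q\in U$, by continuity of $\ominus$ at $(a,a)$.
\item Pick an open $W$ with $f(a)\in W\subseteq f(U)$.
\item For any $y\in W^{(f(a))}$, lift $f(a)\vee y=f(p)$ and $f(a)\ominus y=f(q)$ with $p,q\in U$.
\end{itemize}
Then $y=f(p\ominus q)\in f(G)$ by the identity $y=(b\vee y)\ominus(b\ominus y)$. This identity follows from $(b\vee y)\ominus y=b\ominus y$ (the paper's computation via Lemma \ref{Lema1}(2),(3)) and from $c\ominus(c\ominus y)=c\wedge y$ with $c=b\vee y$. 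Since $W^{(f(a))}$ is a neighbourhood of $0_B$ by Lemma \ref{Lemma2}, you are done.

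The paper reaches the same conclusions without naming preimages.
\begin{itemize}
\item For $(3)\Rightarrow(2)$ it chooses $W$ with $W\oplus W\subseteq V$ and notes $f(W(a))\subseteq f(W)(f(a))$, so $O=f(W)(f(a))$ is a neighbourhood of $f(a)$. It then shows $O^{(f(a))}\subseteq f(V)$ by splitting $z=(f(a)\wedge z)\oplus(z\ominus f(a))$ with both summands in $f(W)$.
\item For $(2)\Rightarrow(1)$ it takes $V\oplus W\subseteq U$, uses $f(W^{(a)})\subseteq f(W)^{(f(a))}$, and shows $O(f(a))\subseteq f(W)\oplus f(V)$.
\end{itemize}
Unwinding the latter, the preimage it implicitly produces is exactly your $(a\ominus w)\oplus v$. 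So the algebraic core, Lemma \ref{Lema1}(1) together with $b\wedge z=b\ominus(b\ominus z)$, is shared.

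The difference is in how smallness is controlled. You use continuity of a single term map at the base point. This gives one uniform template for both directions and makes Lemma \ref{Lemma2} unnecessary in $(2)\Rightarrow(1)$. The paper encodes smallness through $\oplus$-products of neighbourhoods and the auxiliary sets $W(a)$, $W^{(a)}$. That costs more bookkeeping, but every step is a plain set inclusion. Neither route needs surjectivity.
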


\begin{proof}
 It is clear that $(1)\Rightarrow(2)$ and $(2)\Rightarrow(3)$.

 $(3)\Rightarrow(2)$. If $a=0_A$, there is nothing to prove. Assume that $a\neq 0_A$, and take any open neighbourhood $V$ of $0_A$.  Since $A$ is a topological MV-algebra, there is an
 open neighbourhood $W$ of $0_A$ such that $W\oplus W\subseteq V$. Then $W(a)$ is a neighbourhood of $a$ by Proposition \ref{Pro1}. Since $f$ is a homomorphism of MV-algebras, one can easily show that
  $f(W(a))\subseteq f(W)(f(a))$. Indeed, take any $y\in W(a)$. Then $y\ominus a\in W$ and $a\ominus y\in W$. $f$ is a homomorphism of MV-algebras, so,
  \[f(y)\ominus f(a)=f(y\ominus a)\in f(W)\]
   and
   \[f(a)\ominus f(y)=f(a\ominus y)\in f(W),\]
   which implies that $f(y)\in f(W)(f(a))$. Since
 $f$ is open at $a$, $f(W(a))$ is a neighbourhood of $f(a)$. Hence, $f(W)(f(a))$ is a neighbourhood of $f(a)$. Put $O=f(W)(f(a))$.
 Then we claim that $O^{(f(a))}\subseteq f(V)$. Take any $z\in O^{(f(a))}$. Then,
 \[z\vee f(a)\in O=f(W)(f(a))\]
 and
 \[f(a)\ominus z\in  O=f(W)(f(a)). \]
Hence, by Lemma \ref{Lema1},
\begin{align*}
z\ominus f(a)&=z\odot f(a)^\ast
\\&= (z\odot f(a)^\ast)\vee 0
\\&=(z\odot f(a)^\ast)\vee (f(a)\odot f(a)^\ast)
\\&= (z\vee f(a))\odot f(a)^\ast
\\&=(z\vee f(a))\ominus f(a) \in f(W)
\end{align*}
and
\begin{align*}
f(a)\wedge z&=f(a)\odot (f(a)^\ast \oplus z)
\\&=f(a)\ominus (f(a)^\ast \oplus z)^\ast
\\&=f(a)\ominus (f(a) \odot z^\ast)
\\&=f(a)\ominus( f(a)\ominus z)\in f(W)
\end{align*}
Since $W\oplus W\subseteq V$ and $f$ is a homomorphism of MV-algebras, we obtain that $f(W)\oplus f(W)\subseteq f(V)$. Hence,
\[z=(f(a)\wedge z)\oplus(z\ominus f(a))\in f(W)\oplus f(W)\subseteq f(V).\] This completes the proof of $O^{(f(a))}\subseteq f(V).$
By Lemma \ref{Lemma2}, $O^{(f(a))}$ is a neighbourhood of $f(0_A)$, because $O=f(W)(f(a))$ is a neighbourhood of $f(a)$. Thus, $f(V)$
 is a neighbourhood of $f(0_A)$. This shows that $f$ is open at $0_A$.

 $(2)\Rightarrow(3).$  It is enough to show that $f$ is open at any $a\in A$. Take an open neighbourhood $U$ of $a$. Since $0_A\oplus a\in U$ and $A$ is a
 topological MV-algebra, there are an open neighbourhood $V$ of $0_A$ and an open neighbourhood $W$ of $a$ such that $V\oplus W\subseteq U$.
 By $f$ being a homomorphism, one can obtain that $f(V)\oplus f(W)\subseteq f(U)$. Since $W$ is an open neighbourhood of $a$, $W^{(a)}$ is
 an open neighbourhood of $0_A$. From the fact that $f$ is open at $0_A$ it follows that both $f(V)$ and $f(W^{(a)})$ are neighbourhoods of $f(0_A)$.
 Then we can take an open neighbourhood $O$ of $f(0_A)$ such that $O\subseteq f(V)\cap f(W^{(a)})$. We claim that $O(f(a))\subseteq f(U)$, which implies that $f(U)$
 is a neighbourhood of $f(a)$, because so is $O(f(a))$. This shows that $f$ is open at $a$.

 Indeed, take any $z\in O(f(a))$. Then, by $f$ being a homomorphism,
 \[ f(a)\ominus z\in O\subseteq f(V)\cap f(W^{(a)})\subseteq f(V)\cap f(W)^{(f(a))} \]
 and
 \[z\ominus f(a)\in O\subseteq f(V)\cap f(W^{(a)})\subseteq f(V)\cap f(W)^{(f(a))}. \]
Because
\begin{align*}
f(a)\wedge z&=f(a)\odot (f(a)^\ast \oplus z)
\\&=f(a)\ominus (f(a)^\ast \oplus z)^\ast
\\&=f(a)\ominus (f(a) \odot z^\ast)
\\&=f(a)\ominus( f(a)\ominus z),
\end{align*}
we obtain that
\[f(a)\wedge z=f(a)\ominus( f(a)\ominus z)\in f(W)\]
 by $f(a)\ominus z\in f(V)\cap f(W)^{(f(a))}\subseteq f(W)^{(f(a))}.$
 Noting that $z\ominus f(a)\in f(V)$, we obtain that
 \[z=(f(a)\wedge z)\oplus (z\ominus f(a))\in f(W)\oplus f(V)\subseteq f(U) \]
 by (1) in Lemma \ref{Lema1}. This implies that $O(f(a))\subseteq f(U)$, which completes the proof of the claim above.
\end{proof}

\begin{theorem}\label{The3}
Let $\varphi:A_1\rightarrow A_2$ be a continuous homomorphism of topological MV-algebras. Then $\varphi$ admits a natural extension to a
continuous homomorphism $\varphi^\bullet:A_1^\bullet\rightarrow A_2^\bullet$. In addition, if $\varphi$ is open and onto, then so is $\varphi^\bullet$.
\end{theorem}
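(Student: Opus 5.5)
We define $\varphi^\bullet$ pointwise: for $f\in A_1^\bullet$ set $\varphi^\bullet(f)=\varphi\circ f$. If $f$ is constant on the intervals of a partition $\{a_0,\dots,a_n\}$, then so is $\varphi\circ f$, hence $\varphi^\bullet(f)\in A_2^\bullet$. Since the operations on $A_i^\bullet$ are defined pointwise by (1) and (2), $\varphi^\bullet$ is an MV-homomorphism: $\varphi^\bullet(f\oplus g)(r)=\varphi(f(r))\oplus\varphi(g(r))$, $\varphi^\bullet(f^\ast)(r)=\varphi(f(r))^\ast$, and $\varphi^\bullet(0^\bullet)=0^\bullet$. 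It extends $\varphi$ in the sense that $\varphi^\bullet\circ i_{A_1}=i_{A_2}\circ\varphi$, because $\varphi^\bullet(a^\bullet)=\varphi(a)^\bullet$.

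For continuity we use \cite[Proposition 3.7]{GLD}: it suffices to check continuity at $0^\bullet$. Take a basic neighbourhood $O(U,\varepsilon)$ of $0^\bullet$ in $A_2^\bullet$, with $U$ open at $0$ in $A_2$. Choose an open neighbourhood $V$ of $0$ in $A_1$ with $\varphi(V)\subseteq U$. If $f\in O(V,\varepsilon)$, then
\[\{r\mid \varphi(f(r))\notin U\}\subseteq\{r\mid f(r)\notin V\},\]
so $\varphi^\bullet(f)\in O(U,\varepsilon)$. We should be aware that the sets $O(U,\varepsilon)$ are the "0-neighbourhoods" of Proposition \ref{Pro3}. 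The basic neighbourhoods of $0^\bullet$ are $O(U,\varepsilon)(0^\bullet)=\{h\mid h\in O(U,\varepsilon),\ 0\ominus h\in O(U,\varepsilon)\}=O(U,\varepsilon)$, so the argument goes through as stated.

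For openness, assume $\varphi$ is open and onto. Then $\varphi^\bullet$ is onto: given $g\in A_2^\bullet$ with values $y_k$ on the intervals $[a_k,a_{k+1})$, pick $x_k$ with $\varphi(x_k)=y_k$ and let $f$ be the step function with these values. By Proposition \ref{Pro3.9} it suffices to show that $\varphi^\bullet$ is open at $0^\bullet$, i.e.\ that $\varphi^\bullet(O(V,\varepsilon))$ is a neighbourhood of $0^\bullet$ for every open neighbourhood $V$ of $0$ in $A_1$. Let $U=\varphi(V)$, which is an open neighbourhood of $0$ in $A_2$ because $\varphi$ is open. We claim that
\[O(U',\varepsilon)\subseteq\varphi^\bullet(O(V,\varepsilon))\]
for an open $U'\subseteq U$ containing $0$; we may simply take $U'=U$. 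Given $g\in O(U,\varepsilon)$ with values $y_k$ on a partition, we construct the lift $f$ interval by interval. If $y_k\in U$, choose $x_k\in V$ with $\varphi(x_k)=y_k$. Otherwise choose any preimage $x_k$, which exists by surjectivity. Then
\[\{r\mid f(r)\notin V\}\subseteq\{r\mid g(r)\notin U\},\]
which has measure less than $\varepsilon$, so $f\in O(V,\varepsilon)$ and $\varphi^\bullet(f)=g$.

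The only delicate point is this last lifting step. It requires choosing preimages inside $V$ precisely on those intervals where $g$ takes values in $\varphi(V)$, and using surjectivity elsewhere. We also have to make sure that $\varphi(V)$ is genuinely a basic-type neighbourhood usable in $O(\cdot,\varepsilon)$. The definition of $O(U,\varepsilon)$ only requires $U$ to be an open neighbourhood of $0$, and since the topology from Proposition \ref{Pro3} does not depend on the chosen base, this causes no problem.
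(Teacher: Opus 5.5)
Your proposal is correct and follows the paper's proof essentially step for step. You define $\varphi^\bullet(f)=\varphi\circ f$ pointwise, check continuity at $0^\bullet$ via $\varphi^\bullet(O(V,\varepsilon))\subseteq O(U,\varepsilon)$ whenever $\varphi(V)\subseteq U$, and get openness at $0^\bullet$ (via Proposition~\ref{Pro3.9}) from $O(\varphi(V),\varepsilon)\subseteq\varphi^\bullet(O(V,\varepsilon))$, choosing preimages in $V$ wherever the value lies in $\varphi(V)$. Your interval-by-interval lift is a slightly more direct version of the paper's value-by-value re-choice, which starts from an arbitrary preimage $g$.
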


\begin{proof}
For each $f\in A_1^\bullet$, define an element $\varphi^\bullet(f)\in A_2^\bullet$ by $\varphi^\bullet(f)(r)=\varphi(f(r))$ for each $r\in J=[0,1).$
Take $f_1,f_2\in A_1^\bullet$ and $r\in J$. Then
 \begin{align*}
\varphi^\bullet(f_1\oplus f_2)(r)&=\varphi((f_1\oplus f_2)(r))
\\&=\varphi(f_1(r)\oplus f_2(r))
\\&=\varphi(f_1(r))\oplus \varphi(f_2(r))
\\&=\varphi^\bullet(f_1)(r)\oplus \varphi^\bullet(f_2)(r)
\\&=(\varphi^\bullet(f_1)\oplus \varphi^\bullet(f_2))(r)
 \end{align*}
This implies that $\varphi^\bullet(f_1\oplus f_2)=\varphi^\bullet(f_1)\oplus \varphi^\bullet(f_2).$

Similarly,
\begin{align*}
\varphi^\bullet(f^\ast)(r)&=\varphi(f^\ast(r))
\\&=\varphi((f(r))^\ast)
\\&=\varphi((f(r)))^\ast
\\&=(\varphi^\bullet(f)(r))^\ast
\\&=\varphi^\bullet(f)^\ast(r),
\end{align*}
which implies that $\varphi^\bullet(f^\ast)=\varphi^\bullet(f)^\ast$. Thus, we have proved that $\varphi^\bullet$ is homomorphism.

Next, we shall show that $\varphi^\bullet$ is an extension of $\varphi$. Let $i_{A_i}:A_i\rightarrow A_i^\bullet$ be the canonical embedding defined by $i_{A_i}(a)=a^\bullet$ for $i=1,2$. Then for each $a\in A_1$ and each $r\in [0,1)$, we obtain that
\begin{align*}
\varphi^\bullet(i_{A_1}(a))(r)&=\varphi^\bullet(a^\bullet)(r)
\\&=\varphi((a^\bullet)(r))
\\&=\varphi(a)
\\&=\varphi(a)^\bullet(r)
\\&=i_{A_2}(\varphi(a))(r),
\end{align*}
which implies that $\varphi^\bullet(i_{A_1}(a))=i_{A_2}(\varphi(a))$. Thus we have proved that $\varphi^\bullet$ is an extension of $\varphi$.

If $\varphi$ is continuous, then to show that $\varphi^\bullet$ is continuous it is enough to show that $\varphi^\bullet$ is continuous at $0_{A_1}^\bullet$
by \cite[Proposition 3.7]{GLD}. Take any neighbourhood $O(V,\varepsilon)$ of $\varphi^\bullet(0_{A_1}^\bullet)$. Since $\varphi$ is continuous at $0_{A_1}$ and $V$ is a
 neighbourhood of $\varphi(0_{A_1})$, there is a neighbourhood $U$ of $0_{A_1}$ such that $\varphi(U)\subseteq V$. We claim that the neighbourhood $O(U,\varepsilon)$
 of $0_{A_1}^\bullet$ satisfies $\varphi^\bullet(O(U,\varepsilon))\subseteq O(V,\varepsilon).$

 Take any $f\in O(U,\varepsilon)$. Put
 \[E_1=\{r\in [0,1)\mid f(r)\notin U\}\]
 and
 \[E_2=\{r\in [0,1)\mid \varphi(f(r))\notin V\}\]

 If $r\in E_2$ and $r\notin E_1$, then $f(r)\in U$. Thus, by
$\varphi(U)\subseteq V $, we obtain that $\varphi(f(r))\in V$, which implies that $r\notin E_2$. This is a contradiction with $r\in E_2$. Hence,
 $E_2\subseteq E_1$. Noting that $f\in O(U,\varepsilon)$, $\mu(E_2)\leq \mu(E_1)<\varepsilon$. This implies that
 \[\varphi^\bullet (f)=\varphi\circ f\in O(V,\varepsilon).\]
 Thus, we have proved that $\varphi^\bullet$ is continuous.

Finally, we shall show that $\varphi^\bullet$ is open when $\varphi$ is open and onto. It is obvious that $\varphi^\bullet$ is onto. In fact, fixing an $f\in A_2^\bullet$, since $\varphi$ is onto and $f$ is a step function, $f$ has only finitely many values. For each value $b$ of $f$, choose $a_b\in\varphi^{-1}(b)$ and define $g:J\rightarrow A_1$ by $g(r)=a_{f(r)}$. Then $g\in A_1^\bullet$, and one can easily show that $\varphi^\bullet(g)=f$. This implies that $\varphi^\bullet$ is onto.
According to Proposition \ref{Pro3.9} it is enough to show that
$\varphi^\bullet$ is open at $0_{A_1}^\bullet$. Take any open neighbourhood $O(V,\varepsilon)$ of $0_{A_1}^\bullet$ in $A_1^\bullet$. Since $\varphi$ is open
and $V$ is an open neighbourhood of $0_{A_1}$ in $A_1$, $\varphi(V)$ is an open neighbourhood of $\varphi(0_{A_1})$ in $A_2$. It is obvious that
$O(\varphi(V),\varepsilon)$ is an open neighbourhood of $\varphi(0_{A_1})^\bullet$ in $A_2^\bullet$. We claim that
$O(\varphi(V),\varepsilon)\subseteq \varphi^\bullet(O(V,\varepsilon))$, which implies that $\varphi^\bullet$ is open at $0_{A_1}^\bullet$.

Indeed, take any $f\in O(\varphi(V),\varepsilon)$. Then
\[\mu\big(\{r\in[0,1)\mid f(r)\notin \varphi(V)\}\big)<\varepsilon\]
and there is a $g\in A_1^\bullet$ such that \[f=\varphi^\bullet(g)=\varphi\circ g.\]
Since $g$ is a step function, it has only finitely many values. For each value $c$ of $g$, if $\varphi^{-1}(\varphi(c))\cap V\neq \emptyset$, then fix an element $a_c\in \varphi^{-1}(\varphi(c))\cap V$; otherwise, fix an element $a_c\in \varphi^{-1}(\varphi(c))$.
Define $h_g:[0,1)\rightarrow A_1$ by $h_g(r)=a_{g(r)}$. Then $h_g\in A_1^\bullet$. It is obvious that
 \[\varphi^\bullet(h_g)(r)=\varphi(h_g(r))=\varphi(a_{g(r)})=\varphi(g(r))=f(r)\]
 holds for each $r\in [0,1).$ This implies that $\varphi^\bullet(h_g)=f.$
 According to the definition of $h_g$, if $f(r)\in \varphi(V)$, then $h_g(r)\in V$; if $f(r)\notin \varphi(V)$, then no element of $\varphi^{-1}(f(r))$ belongs to $V$. Hence
 \[\{r\in[0,1)\mid h_g(r)\notin V\}\subseteq \{r\in[0,1)\mid f(r)\notin \varphi(V)\}.\]
 Then
 \[\mu\big(\{r\in[0,1)\mid h_g(r)\notin V\}\big)\leq \mu\big(\{r\in[0,1)\mid f(r)\notin \varphi(V)\}\big)<\varepsilon,\]
 which implies that $h_g\in O(V,\varepsilon).$ This completes the proof of $O(\varphi(V),\varepsilon)\subseteq \varphi^\bullet(O(V,\varepsilon))$.
\end{proof}

In what follows, a continuous quotient homomorphism means a continuous
homomorphism of topological \(MV\)-algebras which is also a quotient map in
the topological sense. 

\begin{lemma}\cite[Theorem 3.3]{XieYang}\label{Lem:quotient-open}
Let \(\varphi:A_1\rightarrow A_2\) be a continuous quotient homomorphism of
topological \(MV\)-algebras. Then \(\varphi\) is an open mapping.
\end{lemma}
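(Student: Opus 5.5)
We must show that a continuous quotient homomorphism $\varphi:A_1\rightarrow A_2$ maps open sets to open sets. The strategy is the one used for topological groups: saturate open sets using the algebraic structure, and exploit the fact that the fibres of $\varphi$ are cosets of the kernel. Let $I=\varphi^{-1}(0_{A_2})$, an ideal of $A_1$. For $x,y\in A_1$ we have $\varphi(x)=\varphi(y)$ if and only if $d(x,y)\in I$, since $\varphi(d(x,y))=d(\varphi(x),\varphi(y))$ and $d(u,v)=0$ iff $u=v$. Consequently, for $U\subseteq A_1$ open, the saturation is
\[\varphi^{-1}(\varphi(U))=\{y\in A_1\mid \exists x\in U,\ d(x,y)\in I\}.\]
Since $\varphi$ is a quotient map, $\varphi(U)$ is open exactly when this saturation is open. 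So the whole task reduces to proving that the saturation of an open set is open.

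To prove this, fix $y$ in the saturation and choose $x\in U$ with $d(x,y)=:c\in I$. We want an explicit element of the fibre of $y$ that lies in $U$ and depends continuously on $y$. The natural candidate is the translation
\[\tau(z)=(z\ominus (y\ominus x))\oplus (x\ominus y).\]
This map is continuous in $z$, because $\oplus$, ${}^\ast$ and hence $\ominus$ are continuous. It also satisfies $\varphi(\tau(z))=\varphi(z)$ for all $z$, because $y\ominus x$ and $x\ominus y$ lie in $I$ and $\varphi$ kills $I$: indeed $\varphi(\tau(z))=(\varphi(z)\ominus 0)\oplus 0=\varphi(z)$.

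The remaining point is to check that $\tau(y)=x$. Write $p=y\ominus x$ and $q=x\ominus y$. Then $y\ominus p=y\odot(y\odot x^\ast)^\ast=y\wedge x$, and $(x\wedge y)\oplus(x\ominus y)=x$ by Lemma \ref{Lema1}(1). Hence $\tau(y)=x\in U$.

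By continuity of $\tau$, the set $\tau^{-1}(U)$ is an open neighbourhood of $y$. Every $z$ in it satisfies $\tau(z)\in U$ and $\varphi(z)=\varphi(\tau(z))\in\varphi(U)$, so $\tau^{-1}(U)\subseteq\varphi^{-1}(\varphi(U))$. Thus the saturation is open, and therefore $\varphi(U)$ is open.

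The main obstacle is finding the right algebraic translation $\tau$, which replaces the group translation $z\mapsto zy^{-1}x$. The identity $(y\ominus(y\ominus x))\oplus(x\ominus y)=x$ is the key computation, and it follows from the facts $y\ominus(y\ominus x)=x\wedge y$ and Lemma \ref{Lema1}(1) established in the proof of Proposition \ref{Pro3.9}. Continuity of $\varphi$ is not even needed beyond its role in making $\varphi$ a quotient map.
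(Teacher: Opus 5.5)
Your proof is correct and complete. The paper does not prove this lemma; it imports it from Theorem 3.3 of Xie--Yang. So there is no in-paper argument to compare against, and yours is a valid self-contained replacement.

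The steps all check out:
\begin{itemize}
\item if $\varphi(x)=\varphi(y)$, then $p=y\ominus x$ and $q=x\ominus y$ lie in $I=\ker\varphi$, because $\varphi(y\ominus x)=\varphi(y)\ominus\varphi(x)=\varphi(y)\odot\varphi(y)^\ast=0$;
\item the map $\tau_{p,q}(z)=(z\ominus p)\oplus q$ is continuous and satisfies $\varphi\circ\tau_{p,q}=\varphi$, since $u\ominus 0=u\odot 1=u$;
\item $\tau_{p,q}(y)=(y\wedge x)\oplus(x\ominus y)=x$, using $y\odot(y\odot x^\ast)^\ast=y\odot(y^\ast\oplus x)$ and Lemma \ref{Lema1}(1).
\end{itemize}
In effect you prove $\varphi^{-1}(\varphi(U))=\bigcup_{p,q\in I}\tau_{p,q}^{-1}(U)$. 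This is the MV-analogue of $UN=\bigcup_{n\in N}Un$ for topological groups.

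It is worth saying explicitly how you get around the lack of inverses. The map $\tau_{p,q}$ is not a homeomorphism, so the saturation cannot be written as a union of homeomorphic copies of $U$. Preimages of $U$ under continuous fibre-preserving self-maps suffice instead, and these need only the continuity of $\oplus$ and ${}^\ast$.

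The paper handles openness in Proposition \ref{Pro3.9} with local machinery: the sets $U(a)$, $U^{(a)}$ and Lemma \ref{Lemma2}. Your route is global and shorter. It uses nothing about $\varphi$ beyond $\varphi(I)=0$ and the quotient property, so it shows directly that the $\equiv_I$-saturation of an open set is open for every ideal $I$, closed or not. That is the real content of the lemma.
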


\begin{corollary}\label{Cor:HM-preserves-quotient}
The Hartman--Mycielski construction preserves continuous quotient
homomorphisms. More precisely, let
\[
\varphi:A_1\rightarrow A_2
\]
be a continuous quotient homomorphism of topological \(MV\)-algebras. Then
\[
\varphi^\bullet:A_1^\bullet\rightarrow A_2^\bullet
\]
is also a continuous quotient homomorphism.
\end{corollary}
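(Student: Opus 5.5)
The plan is to chain Lemma \ref{Lem:quotient-open} with Theorem \ref{The3}, and then use the elementary fact that a continuous open surjection is a quotient map.

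First, a continuous quotient homomorphism $\varphi:A_1\rightarrow A_2$ is in particular a quotient map, so it is surjective. By Lemma \ref{Lem:quotient-open}, $\varphi$ is also open. Thus $\varphi$ is a continuous, open, onto homomorphism of topological MV-algebras.

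Second, apply Theorem \ref{The3} to $\varphi$. It gives that the natural extension $\varphi^\bullet:A_1^\bullet\rightarrow A_2^\bullet$, defined by $\varphi^\bullet(f)=\varphi\circ f$, is a continuous homomorphism. Since $\varphi$ is open and onto, the same theorem shows that $\varphi^\bullet$ is open and onto as well.

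Third, I would show that $\varphi^\bullet$ is a quotient map in the topological sense. Let $W\subseteq A_2^\bullet$ be a set with $(\varphi^\bullet)^{-1}(W)$ open in $A_1^\bullet$. Surjectivity gives $W=\varphi^\bullet\big((\varphi^\bullet)^{-1}(W)\big)$, and openness of $\varphi^\bullet$ then makes $W$ open. Conversely, continuity of $\varphi^\bullet$ gives that preimages of open sets are open. Hence $\varphi^\bullet$ is a continuous quotient homomorphism.

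There is no real obstacle here. The only point that needs care is noting that a quotient map is surjective by definition, which is what lets us invoke the ``open and onto'' clause of Theorem \ref{The3}.
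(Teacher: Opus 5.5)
Your proposal is correct and follows the paper's proof step for step. Surjectivity comes from the quotient property, openness of $\varphi$ from Lemma \ref{Lem:quotient-open}, and continuity, openness and surjectivity of $\varphi^\bullet$ from Theorem \ref{The3}, after which a continuous open surjection is a quotient map. The only difference is that you verify this last standard fact explicitly, whereas the paper simply asserts it.
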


\begin{proof}
Since \(\varphi\) is a quotient homomorphism, it is onto and continuous. By
Lemma \ref{Lem:quotient-open}, \(\varphi\) is open. Therefore, by Theorem
\ref{The3}, the induced homomorphism
\[
\varphi^\bullet:A_1^\bullet\rightarrow A_2^\bullet
\]
is continuous, open and onto. Hence \(\varphi^\bullet\) is a continuous open
surjection. Every continuous open surjection is a quotient map. Consequently,
\(\varphi^\bullet\) is a continuous quotient homomorphism.
\end{proof}

\begin{corollary}\label{Cor:HM-commutes-quotients}
Let \(\varphi:A_1\rightarrow A_2\) be a continuous quotient homomorphism of
topological \(MV\)-algebras, and let
\[
I=\ker\varphi .
\]
Put
\[
I^\bullet=\{f\in A_1^\bullet\mid f(r)\in I\ \text{for all }r\in J\}.
\]
Then
\[
\ker\varphi^\bullet=I^\bullet.
\]
Moreover,
\[
A_1^\bullet/I^\bullet\cong A_2^\bullet
\]
as topological \(MV\)-algebras. In particular, if
\(\pi_I:A\rightarrow A/I\) is the canonical quotient homomorphism and \(A/I\)
is endowed with the quotient topology, then there is a natural topological
\(MV\)-isomorphism
\[
(A/I)^\bullet\cong A^\bullet/I^\bullet .
\]
Thus the Hartman--Mycielski construction commutes with taking quotients.
\end{corollary}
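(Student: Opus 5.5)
The plan is to obtain everything from Corollary \ref{Cor:HM-preserves-quotient} together with the correspondence between ideals and quotient maps.

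\textbf{Step 1: the kernel.} Recall that $\ker\varphi^\bullet=\{f\mid \varphi^\bullet(f)=0^\bullet_{A_2}\}$. By definition $\varphi^\bullet(f)(r)=\varphi(f(r))$. Hence $\varphi^\bullet(f)=0^\bullet$ if and only if $\varphi(f(r))=0$ for every $r\in J$, that is, $f(r)\in I$ for all $r$. This gives $\ker\varphi^\bullet=I^\bullet$. In particular $I^\bullet$ is an ideal of $A_1^\bullet$; one can also check this directly, pointwise, since the order on $A_1^\bullet$ is pointwise.

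\textbf{Step 2: the algebraic isomorphism.} By Corollary \ref{Cor:HM-preserves-quotient}, $\varphi^\bullet$ is a continuous open surjective homomorphism. The MV-algebra homomorphism theorem then gives an algebraic isomorphism
\[
\psi:A_1^\bullet/I^\bullet\to A_2^\bullet,\qquad f/I^\bullet\mapsto\varphi^\bullet(f).
\]
It is well defined and injective because $f\equiv_{I^\bullet} g$ holds iff $d(f,g)\in I^\bullet$. Since $\varphi^\bullet$ is a homomorphism, this is equivalent to $\varphi^\bullet(d(f,g))=d(\varphi^\bullet f,\varphi^\bullet g)=0$, i.e.\ $\varphi^\bullet(f)=\varphi^\bullet(g)$.

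\textbf{Step 3: the topology.} Give $A_1^\bullet/I^\bullet$ the quotient topology, so that $\rho=\rho_{I^\bullet}$ is a quotient map. We have $\psi\circ\rho=\varphi^\bullet$.
\begin{itemize}
\item $\psi$ is continuous: for $W\subseteq A_2^\bullet$ open, $\rho^{-1}(\psi^{-1}(W))=(\varphi^\bullet)^{-1}(W)$ is open, so $\psi^{-1}(W)$ is open by the definition of the quotient topology.
\item $\psi$ is open: for $O$ open in the quotient, $\psi(O)=\varphi^\bullet(\rho^{-1}(O))$ is open because $\varphi^\bullet$ is open.
\end{itemize}
Thus $\psi$ is a topological MV-isomorphism. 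One point needs care: the quotient topology on $A_1^\bullet/I^\bullet$ should make it a topological MV-algebra. This follows here because the quotient is homeomorphic, via $\psi$, to the topological MV-algebra $A_2^\bullet$, and the operations correspond under $\psi$. So no separate result on quotient topologies is needed.

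\textbf{Step 4: the particular case.} Apply the above with $A_1=A$, $A_2=A/I$ and $\varphi=\pi_I$, taking $A/I$ with the quotient topology. Then $\pi_I$ is a continuous quotient homomorphism with kernel $I$, by the lemma on $\equiv_I$ ($I=\{x\mid x\equiv_I0\}$). Hence $(A/I)^\bullet\cong A^\bullet/I^\bullet$. Naturality is visible from the formula $f/I^\bullet\mapsto \pi_I\circ f$.

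The main obstacle I expect is this last application. For $\pi_I$ to be a continuous quotient homomorphism \emph{of topological MV-algebras}, $A/I$ with the quotient topology must be a topological MV-algebra. I would justify this as follows. First, $\pi_I$ is open: for $U$ open, $\pi_I^{-1}(\pi_I(U))=\bigcup_{i\in I}\{x\mid d(x,u)=i \text{ for some } u\in U\}$, and I would try to show this set is open using continuity of the operations. Granting openness, continuity of the operations on $A/I$ follows, since a product of open surjections is open, hence quotient. If proving openness directly is troublesome, I would instead take as part of the hypothesis that $A/I$ is a topological MV-algebra, which is how the statement is phrased.
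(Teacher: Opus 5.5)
Your proposal is correct and follows essentially the paper's proof: the pointwise kernel computation, the homomorphism theorem, the factorization $\varphi^\bullet=\psi\circ\rho$ giving a homeomorphism, and then specialization to $\varphi=\pi_I$. The differences are minor. To show $\psi^{-1}$ is continuous you use openness of $\varphi^\bullet$, where the paper uses that $\varphi^\bullet$ is a quotient map and $\rho$ is continuous. The point you flag in Step 4, that $A/I$ with the quotient topology must be a topological MV-algebra, is simply taken for granted in the paper. It can be settled with the Mal'cev term: if $x\equiv_I u$ with $u\in U$, then the map $y\mapsto p(y,x,u)=((y\ominus x)\oplus u)\wedge((u\ominus x)\oplus y)$ is continuous, sends $x$ to $u$, and satisfies $p(y,x,u)\equiv_I y$. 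Hence $\pi_I^{-1}(\pi_I(U))$ is open, so $\pi_I$ is open, and your product-of-open-maps argument then finishes it.
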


\begin{proof}
For each \(f\in A_1^\bullet\), we have
\[
f\in\ker\varphi^\bullet
\]
if and only if
\[
\varphi^\bullet(f)=0_{A_2}^\bullet.
\]
By the definition of \(\varphi^\bullet\), this is equivalent to
\[
\varphi(f(r))=0_{A_2}
\quad\text{for all }r\in J.
\]
Equivalently,
\[
f(r)\in \ker\varphi=I
\quad\text{for all }r\in J.
\]
Hence
\[
\ker\varphi^\bullet=I^\bullet.
\]
It is also immediate from the pointwise operations on \(A_1^\bullet\) that
\(I^\bullet\) is an ideal of \(A_1^\bullet\).

By Corollary \ref{Cor:HM-preserves-quotient},
\[
\varphi^\bullet:A_1^\bullet\rightarrow A_2^\bullet
\]
is a continuous quotient homomorphism. Therefore, by the homomorphism theorem
for \(MV\)-algebras \cite[Corollary 3.7]{XieYang}, \(\varphi^\bullet\) induces a bijective \(MV\)-homomorphism
\[
\overline{\varphi^\bullet}:A_1^\bullet/I^\bullet\rightarrow A_2^\bullet
\]
defined by
\[
\overline{\varphi^\bullet}([f]_{I^\bullet})=\varphi^\bullet(f).
\]
Let
\[
q:A_1^\bullet\rightarrow A_1^\bullet/I^\bullet
\]
be the canonical quotient map. Then
\[
\varphi^\bullet=\overline{\varphi^\bullet}\circ q.
\]
Since \(q\) is a quotient map and \(\varphi^\bullet\) is continuous,
\(\overline{\varphi^\bullet}\) is continuous. Conversely, since
\(\varphi^\bullet\) is a quotient map and
\[
q=(\overline{\varphi^\bullet})^{-1}\circ \varphi^\bullet,
\]
the continuity of \(q\) implies that
\((\overline{\varphi^\bullet})^{-1}\) is continuous. Therefore
\(\overline{\varphi^\bullet}\) is a homeomorphism. Since it is also an
\(MV\)-isomorphism, it is a topological \(MV\)-isomorphism.

Taking \(\varphi=\pi_I:A\rightarrow A/I\), we obtain
\[
(A/I)^\bullet\cong A^\bullet/I^\bullet.
\]
This proves that the Hartman--Mycielski construction commutes with taking
quotients.
\end{proof}



{\bf Conclusion:} It is well known that MV-algebras play an important role in investigating the algebraic structures of logical systems.
Recently, several mathematicians have studied a number of algebraic structures associated with logical systems endowed with a topology.
Undoubtedly, topological MV-algebras, introduced by Hoo \cite{Hoo}, are among the most important structures in this context. In this study,
we mainly show how, for every Hausdorff topological MV-algebra $A$, one can topologically and isomorphically embed $A$ into a pathwise connected,
locally pathwise connected topological MV-algebra $A^\bullet$. This provides a method for constructing a new topological MV-algebra from an existing one.
In future work, we plan to investigate the connections between the algebraic and topological properties of the topological MV-algebra $A$ and
 its associated topological MV-algebra $A^\bullet$.


\def\cprime{$'$}

\end{document}